\title[Geometric Wasserstein spaces]{A geometric study of Wasserstein spaces: Hadamard spaces}
\author{J\'er\^ome Bertrand}
\address{Institut de Math\'ematiques\\ Universit\'e Paul Sabatier \\ 118 route de
  Narbonne \\ F31062 Cedex 9 Toulouse\\ France}
\email{bertrand@math.univ-toulouse.fr}
\author{Beno\^{\i}t Kloeckner}
\address{Universit\'e de Grenoble I, Institut Fourier\\ CNRS UMR 5582\\ BP 74\\
  38402 Saint Martin d'H\`eres cedex\\ France}
\email{benoit.kloeckner@ujf-grenoble.fr}
\theoremstyle{plain}
\newcommand{\CAT}{\mathop {\mathrm{CAT}}\nolimits} 
\newcommand{\isom}{\mathop {\mathrm{Isom}}\nolimits}
\newcommand{\wass}{\mathop {\mathscr{W}_2}\nolimits}
\newcommand{\dw}{\mathop {\mathrm{W}}\nolimits}
\newcommand{\db}{d_{\infty}}
\newcommand{\dwb}{\mathrm{W}_{\infty}}
\newcommand{\supp}{\mathop {\mathrm{supp}}\nolimits}
\newcommand{\ep}{\varepsilon}
\newcommand{\id}{\mathrm{Id}}
\newcommand{\pr}{\mathscr{P}}
\renewcommand{\leq}{\leqslant}
\renewcommand{\geq}{\geqslant}
\begin{document}

\begin{abstract}
Optimal transport enables one to construct a metric on the set of
(sufficiently small at infinity) probability measures on any (not too wild) metric space $X$,
called its Wasserstein space $\wass(X)$.

In this paper we investigate the geometry of $\wass(X)$
when $X$ is a Hadamard space, by which we mean that $X$ has globally non-positive
sectional curvature and is locally compact.
Although it is known that --except in the case of the line-- $\wass(X)$
is not non-positively curved, our results show that $\wass(X)$ have large-scale properties
reminiscent of that of $X$. In particular we define a geodesic boundary for
$\wass(X)$ that enables us to prove a non-embeddablity result: if $X$ has the visibility
property, then the Euclidean plane does not admit any isometric embedding in $\wass(X)$.
\end{abstract}

\maketitle

\section{Introduction}

The goal of this paper is to contribute to the understanding of the 
geometry of Wasserstein spaces. Given a metric space $X$, the theory
of optimal transport (with quadratic cost)
gives birth to a new metric space, made of probability measures on $X$, 
often called its Wasserstein space and denoted here by $\wass(X)$
(precise definitions are recalled in the
first part of this paper). One can use this theory to study $X$, for example
by defining lower Ricci curvature bounds as in the celebrated works of
Lott-Villani \cite{Lott-Villani} and Sturm \cite{Sturm}. Conversely,
here we assume some
understanding of $X$ and try to use it to study geometric properties of $\wass(X)$.
A similar philosophy underlines the works of Lott in \cite{Lott} and
Takatsu and Yokota in \cite{Takatsu-Yokota}.

In a previous paper \cite{Kloeckner}, the second named author studied the case when $X$ is 
a Euclidean space.
Here we are interested in the far broader class of \emph{Hadamard spaces} which are
roughly the globally non-positively curved spaces. The first part of the paper gives 
the classical definitions and property we need both on optimal transport and Hadamard 
spaces; in particular the precise hypotheses
under which we shall work are given there (Definition \ref{defi:Hadamard}, see
also examples \ref{exem:Hadamard}).
Let us stress that we allow $X$ to be branching; trees, product
involving trees, some buildings are in particular treated in the same framework than, for example,
symmetric spaces of non-compact type.

While non-negative curvature is an assumption that is inherited by Wasserstein spaces,
it is well-known that non-positive curvature is not (an argument is recalled
in Section \ref{sec:interpolation}). We shall however show that some
features of Hadamard spaces still hold in their Wasserstein spaces. Let us now describe
the main results of the article.

A Hadamard space admits a well-known geometric compactification, obtained by adding a boundary 
at infinity made of asymptote classes of geodesic rays. In sections \ref{sec:displacement}
and \ref{sec:asymptotic}, we study the 
geodesic rays of $\wass(X)$. Using a displacement interpolation procedure (Proposition
\ref{prop:displacement}), we associate to each ray its \emph{asymptotic measure}
which lies in a subset $\mathscr{P}_1(c\partial X)$ of probability measures on 
the cone $c\partial X$ over the boundary of $X$ (Definition \ref{defi:asymptotic}).
It encodes the asymptotic distribution of the direction and speed of a measure 
running along the ray. Our
first main result is the \emph{asymptotic formula} (Theorem \ref{theo:asymptotic}) which enables
one to compute the asymptotic behavior of the distance between two rays in terms of 
the Wasserstein distance of the
 asymptotic measures, with respect to the angular cone distance on $c\partial X$. This asymptotic distance
is either bounded or asymptotically linear, so that the boundary $\partial\wass(X)$
of the Wasserstein space, defined as the set of asymptote classes of unit geodesic rays,
inherits an angular metric, just like $X$ does. A striking consequence of the asymptotic 
formula concerns the rank of $\wass(X)$, and partially
answers a question raised in the previous paper cited above.
\begin{theo}\label{theo:intro:rank}
If $X$ is a visibility space (e.g. if it has curvature bounded from above by a negative constant),
then it is not possible to embed the Euclidean plane isometrically in $\wass(X)$.
\end{theo}
In other words, when $X$ is strongly negatively curved --which implies that its has rank $1$--, then
although $\wass(X)$ is not negatively curved it has rank $1$ too. Note that our large-scale
method in fact implies more general non-embedding results, see Proposition \ref{prop:snowflaked}
and the discussion below. It is important to stress that asking
for \emph{isometric} embedding is the right regularity: more flexible conditions
are easily dealt with, see example \ref{exem:cylinder}. Besides this property on the rank, the 
Wasserstein space over a visibility space can also be differentiated from the one over an Euclidean 
space through its isometry group $ \isom \wass(X)$. Indeed, contrary to the Euclidean case 
where the isometry group $\isom \wass (\mathbb{R}^n)$ is larger than $\isom \mathbb{R}^n$ 
\cite{Kloeckner}, negatively curved spaces seem to have isometrically
rigid Wasserstein spaces in the sense that $\isom \wass (X)=\isom X$.
This holds at least in the case of manifolds and trees, as proved in a previous
version of this paper \cite{BK}; this result uses different methods from the ones
developped here and we aim at extending it, it shall therefore appear in a subsequent article.

Section \ref{part:boundary} is devoted to the definition of a so-called
\emph{cone topology} on $\partial\wass(X)$ and 
$\overline{\wass(X)}=\wass(X)\cup\partial\wass(X)$,
see Proposition \ref{prop:conetop}. Note that the angular metric
alluded to above, however useful and meaningful, does not define a satisfactory
topology (just as in $\partial X$, where the angular metric is usually not separable
and can even be discrete).
The point is that many monotony properties used
in the case of Hadamard spaces hold when one restricts to angles based at a Dirac mass. This
enables us to carry out the construction of this topology despite the presence of positive curvature.
The main result of this part is the following, restated as Theorem \ref{theo:homeobound}.
\begin{theo}
The asymptotic measure map defined from the boundary $\partial\wass(X)$
to the set of measures $\mathscr{P}_1(c\partial X)$ is a homeomorphism.
\end{theo}
Note that the two natural topologies on $\partial\wass(X)$,
namely the cone topology and the quotient topology of the topology of uniform
convergence on compact sets, coincide. The set $\mathscr{P}_1(c\partial X)$
is simply endowed with the weak topology (where the topology on $c\partial X$ is
induced by the cone topology of $\partial X$).

The possibility to identify $\partial\wass(X) $ to $\mathscr{P}_1(c\partial X)$
should be thought of as an interversion result, similar to displacement interpolation. The latter
says that ``a geodesic in the set of measures is a measure on the set of geodesics'', while the former
can be roughly restated as ``a boundary point of the set of measures is a measure on the (cone over the)
set of boundary points''. Note that $\overline{\wass(X)}$ is not compact; this is quite inevitable since
$\wass(X)$ is not locally compact.

\section{Reminders and notations}

As its title indicates, this part contains nothing new. We chose to give quite a lot
of recalls, so that the reader familiar with non-positively curved spaces can get a crash-course
on Wasserstein spaces, and the reader familiar with optimal transport can be introduced
to Hadamard spaces.

\subsection{Hadamard spaces}

Most properties of Hadamard spaces stated here are proved in \cite{Ballmann}.
Another more extensive reference is \cite{BH}.

\subsubsection{Geodesics}

Let us first fix some conventions for any metric space $Y$ (this letter shall be used
to design arbitrary spaces, while $X$ shall be reserved to the (Hadamard) space under study).

A \emph{geodesic} in $Y$
is a curve $\gamma:I\to Y$ defined on some interval $I$, such that there is
a constant $v$ that makes the following hold for all
times $t,t'$: 
\[d(\gamma_t,\gamma_{t'})=v|t-t'|.\]
In particular, all geodesics are assumed to be {\it globally} minimizing
and to have constant, non necessarily unitary speed. 
A metric space is \emph{geodesic} if any pair of points can be linked by a geodesic.

When $v=0$ we say that the geodesic is constant and it will be necessary
to consider this case. We denote by $\mathscr{G}^{T,T'}(Y)$ the set of geodesics
defined on the interval $[T,T']$. A \emph{geodesic ray} (or \emph{ray}, or \emph{complete ray})
is a geodesic defined on the interval $[0,+\infty)$.
A {\it complete geodesic} is a geodesic defined on $\mathbb{R}$. The set of 
rays is denoted by $\mathscr{R}(Y)$, 
the set of unit speed rays by $\mathscr{R}_1(Y)$ and the set of non-constant rays
by $\mathscr{R}_{>0}(Y)$. We shall also denote by $\mathscr{G}^{\mathbb{R}}(Y)$ the set 
of complete geodesics, and by $\mathscr{G}^{\mathbb{R}}_1(Y)$ the set of unit-speed complete
geodesics.

\subsubsection{Non-positive curvature}

A triangle in a geodesic space $Y$ is the datum of three points $(x,y,z)$ together 
with three geodesics parametrized on $[0,1]$
linking $x$ to $y$, $y$ to $z$ and $z$ to $x$. Given a triangle, one defines its
\emph{comparison triangle} $(\tilde x,\tilde y,\tilde z)$ as any triangle of the
Euclidean plane $\mathbb{R}^2$ that has the same side lengths: 
$d(x,y)=d(\tilde x,\tilde y)$, $d(x,z)=d(\tilde x,\tilde z)$
and  $d(y,z)=d(\tilde y,\tilde z)$. The comparison triangle is defined up to congruence.

A triangle with vertices $(x,y,z)$ is said to satisfy the $\CAT(0)$ inequality along
its $[yz]$ side (parametrized by a geodesic $\gamma \in \mathscr{G}^{0,1}(Y)$) if for all $t\in[0,1]$,
the following inequality holds:
\begin{equation}
d(x,\gamma(t))\leq d(\tilde x, (1-t)\tilde y+t\tilde z)
\label{eq:cat0}
\end{equation}
see figure \ref{fig:comparison}. A geodesic space is said to be locally $\CAT(0)$
if every point admits a neighborhood where all triangles satisfy the $\CAT(0)$ inequality
(along all there sides). When $Y$ is a Riemannian manifold, this is equivalent to ask
that $Y$ has non-positive sectional curvature.
A geodesic space is said to be globally $\CAT(0)$ if all its triangles
satisfy the $\CAT(0)$ inequality. Globally $\CAT(0)$ is equivalent to simply connected plus
locally $\CAT(0)$. We shall simply say $\CAT(0)$ for ``globally $\CAT(0)$'', but this is not
a universal convention.

\begin{figure}[ht]\begin{center}
\input{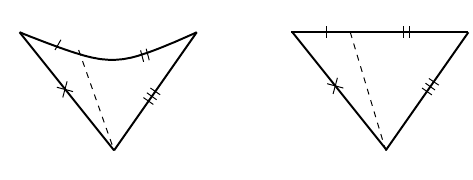_t}
\caption{The $\CAT(0)$ inequality: the dashed segment is shorter in the original triangle
on the left than in the comparison triangle on the right.}\label{fig:comparison}
\end{center}\end{figure}

\subsubsection{Angles}

The $\CAT(0)$ condition can be translated in terms of angles as follows.
Given any geodesic triangle, choose any of its vertices, say $x$, and assume that the 
sides containing $x$ are parametrized by two geodesics $\sigma,\gamma \in \mathscr{G}^{0,1}(Y)$. 
If $Y$ is $\CAT(0)$, then the Euclidean angle $\widetilde{\angle}_{\widetilde{\gamma_s}\tilde x\widetilde{\sigma_t}}$
at $\tilde{x}$ is a nondecreasing function of $s$ and $t$.

One then defines in $Y$ the angle $\angle_{\gamma_1x\sigma_1}$ at $x$ 
as the limit, when $s$ and $t$ go to zero, of 
$\widetilde{\angle}_{\widetilde{\gamma_s}\tilde{x}\widetilde{\sigma_t}}$. As a consequence, one gets that 
for any geodesic triangle with vertices $(x,y,z)$ in a $\CAT(0)$ space, angle and comparison angle satisfy 
\[ \angle_{xyz} \leq \widetilde{\angle}_{\tilde{x}\tilde{y}\tilde{z}}. \]

\subsubsection{Distance convexity}

In a $\CAT(0)$ space, given two geodesics $\gamma$ 
and $\beta$, the distance function $t \longmapsto d(\gamma_t,\beta_t)$ is convex. 
This important property shall be kept in mind since it will be used very often in the sequel.

\subsubsection{Hadamard spaces: definition and examples}

We can now introduce the class of spaces we are interested in.
\begin{defi}\label{defi:Hadamard}
A metric space is a \emph{Hadamard space} if it is:
\begin{itemize}
\item Polish (i.e. complete and separable),
\item locally compact,
\item geodesic,
\item $\CAT(0)$, implying that it is simply connected.
\end{itemize}
\end{defi}
In all what follows, we consider a Hadamard space $X$. The Hadamard assumption 
may not always be made explicit, but the use of the letter $X$ shall always implicitely imply it.
Not all authors assume Hadamard spaces to be locally compact, and this assumption
excludes for example real trees.

\begin{exem}\label{exem:Hadamard}
There are many important examples of Hadamard spaces. Let us give some of them:
\begin{itemize}
\item the Euclidean space $\mathbb{R}^n$,
\item the real hyperbolic space $\mathbb{R}\mathrm{H}^n$,
\item the other hyperbolic spaces $\mathbb{C}\mathrm{H}^n$, $\mathbb{H}\mathrm{H}^n$, $\mathbb{O}\mathrm{H}^2$,
\item more generally the symmetric spaces of non-compact type, like the quotient
  $\mathrm{SL}(n;\mathbb{R})/\mathrm{SO}(n)$
  endowed with the metric induced by the Killing form of $\mathrm{SL}(n;\mathbb{R})$,
\item more generally any simply connected Riemannian manifold whose sectional curvature is 
  non-positive,
\item trees,
\item any product of Hadamard spaces,
\item some buildings, like product of trees having unit edges and no leaf or $I_{pq}$
      buildings (see \cite{Bourdon, Bourdon-Pajot}),
\item the gluing of any two Hadamard spaces along isometric, convex subsets; for example
   any Hadamard space with an additional geodesic ray glued at some point, or three hyperbolic 
   half-planes glued along their limiting geodesics, etc.
\end{itemize}
\end{exem}

\subsubsection{Geodesic boundary}

The construction of the geodesic boundary that we will shortly describe seems
to date back to \cite{Eberlein-ONeill}, but note that \cite{Busemann} is at the origin
of many related ideas.

Two rays of $X$ are {\it asymptotic} if they stay at bounded distance when
$t\to+\infty$, and this relation is denoted by $\sim$. The asymptote class
of a ray $\gamma$ is often denoted by $\gamma(\infty)$ or $\gamma_\infty$,
and is called the \emph{endpoint} or \emph{boundary point} of $\gamma$.

The \emph{geodesic} (or \emph{Hadamard}) \emph{boundary} of $X$
is defined as the set 
\[\partial X = \mathscr{R}_1(X)/\sim.\]
Using the convexity of distance along geodesics,
one can for example prove that, given points $x\in X$ and $\zeta\in\partial X$,
there is a unique unit ray starting at $x$ and ending at $\zeta$.

The union $\bar X=X\cup\partial X$ can be endowed with its so-called cone topology, which
makes $\bar X$ and $\partial X$ compact. 
Without entering into the details, let us say that
this topology induces the original topology on $X$, and that given a base point $x_0$
a basic neighborhood of a point $\zeta=\gamma(\infty)\in\partial X$ 
(where $\gamma$ starts at $x_0$) is the union, over all
rays $\sigma$ starting at $x_0$ such that $d(\sigma_t,\gamma_t)<\varepsilon$ for all $t<R$, of 
$\sigma([R,+\infty])$ (see figure \ref{fig:conetopology}).

\begin{figure}[htp]\begin{center}
\input{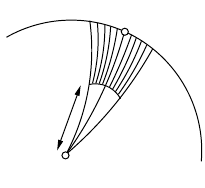_t}
\caption{A basic neighborhood of a point $\zeta \in \partial X$ in the cone topology.}\label{fig:conetopology}
\end{center}\end{figure}

Consistently whith the cone topology, all previously defined sets of geodesics, as well as the larger sets 
$C(I,X)$ of continuous curves
defined on an interval $I$ with values in $X$, are endowed with the topology
of uniform convergence on compact sets. With this topology, since $X$ is Hadamard,
a geodesic segment is uniquely and continuously defined by its endpoints and a geodesic
ray is uniquely and continuously defined by its starting point, its endpoint in the boundary
and its speed. As a consequence, there are natural homeomorphisms
\[\mathscr{G}^{T,T'}(X)\simeq X^2,\qquad\mathscr{R}(X)\simeq X\times c\partial X\]
where $c\partial X$ is the cone over $\partial X$, that is the quotient of
$\partial X\times[0,+\infty)$ by the relation $(x,0)\sim (y,0)$ for all $x,y\in \partial X$.
We usually use the same notation $(x,s)$ for a couple and its equivalence class under this relation;
here $s$ shall be considered
as a speed. In particular one has $(\mathscr{R}(X)/\sim)\simeq c\partial X$.

Note that in view of our assumptions on $X$, all these spaces are locally compact
Polish topological spaces (that is, metrizable by a metric that is separable and complete).
It ensures that all finite measures on these spaces are Radon.

\subsubsection{Possible additional assumptions}

At some points, we shall make explicitely additional hypotheses on $X$. 
One says that a space is:
\begin{itemize}
\item \emph{geodesically complete} if every geodesic can be extended to a complete 
  geodesic,
\item \emph{non-branching} if two geodesics that coincide on an open set of parameters
   coincide everywhere on their common definition interval,
\item $\CAT(\kappa)$ fo some $\kappa<0$ if its triangles satisfy the \eqref{eq:cat0} inequality
  when the comparison triangle is taken in $\mathbb{R}\mathrm{H}^2_\kappa$, the hyperbolic plane of 
  curvature $\kappa$, instead of $\mathbb{R}^2$.
\item a \emph{visibility space} if for all pairs of distinct points $\alpha,\omega\in\partial X$ there is a complete
  geodesic $\gamma$ such that $\gamma(-\infty)=\alpha$ and $\gamma(+\infty)=\omega$.
\end{itemize}
Note that for all $\kappa<0$, the condition $\CAT(\kappa)$ implies that $X$ is a visibility space.

Geodesic completeness is quite mild (avoiding examples as trees with leafs), while the other are 
strong assumptions (for example non-branching rules out  trees and visibility rules out products).
Another possible assumption, that we shall not use directly, is for $X$ to have \emph{rank one},
meaning that it admits no isometric embedding of the Euclidean plane. It is a weaker condition than
visibility. More generally the
rank of a space $Y$ is the maximal dimension of an isometrically
embedded Eulidean space, and it has been proved a
very important invariant in the study of symmetric spaces. For example the hyperbolic spaces
$\mathbb{R}\mathrm{H}^n$, $\mathbb{C}\mathrm{H}^n$, $\mathbb{H}\mathrm{H}^n$ and 
$\mathbb{O}\mathrm{H}^2$ are the only rank one
symmetric spaces of non-compact type.

\subsubsection{Asymptotic distance}

Given two rays $\gamma,\sigma$, one defines their \emph{asymptotic distance} by
\[\db(\gamma,\beta)=\lim_{t\to+\infty} \frac{d(\gamma_t,\beta_t)}{t}.\]
This limit always exists because of the convexity of the distance function along
geodesics. Moreover $\db$ defines a metric on $c\partial X$ and,
by restriction, on $\partial X$ (in particular two rays whose distance grows sub-linearly must
be asymptotic). It can be proved that $d_\infty$ is the cone metric over $\partial X$ endowed with 
the angular metric. Namely, for any $(\xi,s), (\xi',t) \in c\partial X$, 
\begin{equation}\label{equa:conemetric}
 \db^2( (\xi,s), (\xi',t)) = s^2 +t^2 -2st \cos \angle (\xi,\xi')
 \end{equation}
 where $\angle (\xi,\xi')= \sup_{x \in X} \angle_x (\xi,\xi')$ is the supremum of angles between 
 rays issuing from $x$ and asympotic to $\xi$ and $\xi'$ respectively (we refer to \cite[Section 
 II.9]{BH} for more details and proof).

It is most important to keep in mind that the metric $\db$ \emph{does not induce the
cone topology}, but a much finer topology. 
The most extreme case is that of visibility spaces, where $\db(\gamma,\sigma)$ is $0$ if
$\gamma\sim\sigma$ and the sum of the speeds of $\gamma$ and $\sigma$ otherwise: the 
topology induced on $\partial X$ is discrete. However, the function $\db$ is 
lower semi-continuous with respect to the cone topology \cite[Proposition II.9.5]{BH},
so that it is a mesurable function.

In higher rank spaces, it can be useful to turn $\db$ into a length metric, called
the Tits metric, but we shall not use it so we refer the interested reader to the books
cited above. Let us just note that $\db$ resembles in some aspects the chordal metric
on a sphere. In particular, it is naturally isometric to this metric when $X$ is a Euclidean
space.

\subsection{The Wasserstein space}

In this  section, we recall the definition of Wasserstein space and some of its main properties.
For more details, we refer to the books \cite{Villani1} and \cite{Villani}.

\subsubsection{Optimal transport}

Let us start with the concept of optimal transport which is the theory aimed at studying the
\emph{Monge-Kantorovich} problem.

 Standard data for this problem are the following. We are given a Polish metric space $(Y,d)$, a 
 lower semicontinuous and nonnegative function
$c:Y\times Y\to \mathbb{R}^+$ called the cost function and two Borel probability measures $\mu,
\nu$ defined on $Y$.
A \emph{transport plan} between $\mu$ and $\nu$ is a measure on $Y\times Y$ whose marginals are
$\mu$ and $\nu$.
One should think of a transport plan as a specification of how the mass in $Y$, distributed 
according to $\mu$, is moved so as to be distributed according to $\nu$.
We denote by $\Gamma(\mu,\nu)$ the set of transport plans which is never empty (it contains $\mu\otimes\nu$)
and most of the time not reduced to one element. The Monge-Kantorovich problem is now
\[\min_{\Pi\in\Gamma(\mu,\nu)} \int_{Y\times Y} c(x,y) \,\Pi(dxdy)\]
where a minimizer is called an \emph{optimal transport plan}. The set of optimal transport plans is 
denoted by $\Gamma_o(\mu,\sigma)$.

Let us make a few comments on this problem. First, note that under these assumptions, the cost 
function is measurable (see, for instance, \cite[p. 26]{Villani1}). Now, existence of minimizers 
follows readily from the lower semicontinuity of the cost function together with the following 
compactness result which  will be used throughout this paper. We refer to \cite{Billingsley} for a 
proof.

\begin{theo}[Prokhorov's Theorem]\label{theo:prokhorov} Given a Polish space $Y$, a set $P \subset \pr(Y)$ 
is totally bounded (that is, has compact closure) for the weak topology if and only if it is tight,
namely for any $\ep >0$, there 
exists a compact set $K_{\ep}$ such that $\mu(Y \setminus  K_{\ep}) \leq \ep$ for any $\mu \in 
P$.
\end{theo}
For example, the set $\Gamma(\mu,\nu)$ is always compact.

We also mention that, compared to the existence problem, the issue of the \emph{uniqueness} of 
minimizers is 
considerably harder and requires, in general, additional assumptions. To conclude this 
introduction, we state a useful criterium to detect optimal transport plan among other plans, 
named cyclical monotonicity.

\begin{defi}[cyclical monotonocity]\label{defi:cyclical} Given a cost function 
$c: Y\times Y \longrightarrow \mathbb{R}^+$,  a set $\Gamma \subset Y \times Y$ is 
called $c$-cyclically monotone if for any finite family of pairs 
$(x_1,y_1),\cdots ,(x_m,y_m)$ in $\Gamma$, the following inequality holds
\begin{equation}\label{defi:cycli}
\sum_{i=1}^m c(x_i,y_{i+1}) \geq \sum_{i=1}^m c(x_i,y_i)
\end{equation}
where $y_{m+1}=y_1$. 
\end{defi}
In other words, a $c$-cyclically monotone set does not contain cycles of pairs (starting point, ending point)
along which a shift in the ending points would reduce the total cost. 

\begin{theo} Let $(Y,d)$ be a Polish space and $c: Y\times Y \longrightarrow \mathbb{R}^+$ 
be a lower semi-continuous cost function. Then, a transport plan is optimal relatively to 
$c$ if and only if it is concentrated
on a $c$-cyclically monotone set.

If $c$ is continuous, this is equivalent to its support being $c$-cyclically monotone.
\end{theo}
Under these assumptions, this result is due to Schachermayer and Teichmann 
\cite{Schachermayer}; see also \cite{Villani} for a proof.

\subsubsection{Wasserstein space}

Wasserstein spaces arise in a particular variant of the setting above.
\begin{defi}[Wasserstein space] Given a Polish metric space $Y$, its (quadratic) Wasserstein space $\wass(Y)$
is the set of Borel probability measures $\mu$ on $Y$ with  finite second moment, that is such that
\[\int_Y d(x_0,x)^2 \,\mu(dx)<+\infty \qquad\mbox{for some, hence all }x_0\in Y,\]
endowed with the Wasserstein metric defined by
\[ \dw^2(\mu_0,\mu_1) = \min_{\Pi \in \Gamma (\mu_0,\mu_1)} 
\int_{Y \times Y} d^2(x,y) \, \Pi(dx,dy).\]
\end{defi}
From now on, the cost $c$ will therefore be $c=d^2$.

The fact that $\dw$ is indeed a metric follows from the so-called ``gluing lemma'' which enables 
one
to propagate the triangular inequality, see e.g. \cite{Villani}. 
\begin{rema}
In this paper, we will also consider the Wasserstein space over the cone $c \partial X$ relative to the 
cost $\db^2$. Since $\db$ is lower semi-continuous, the usual theory of optimal transport
applies, and we do get a metric on the suitable space of measures. We 
shall denote by $\dwb$ the Wasserstein metric derived from $\db$.
\end{rema}
The Wasserstein space has several nice properties: it is Polish; 
it is compact as soon as $Y$ is, in which case the Wasserstein metric metrizes the weak topology; but
if $Y$ is not compact, then $\wass(Y)$ is not even locally compact and the Wasserstein metric induces
a topology stronger than the weak one (more precisely, convergence in Wasserstein distance
is equivalent to weak convergence plus convergence of the second moment). A very important
property is that $\wass(Y)$ is geodesic as soon as $Y$ is; let us give some details.

\subsubsection{Displacement interpolation}\label{sec:interpolation}

The proof of what we explain now can be found for example in chapter 7 of \cite{Villani},
see in particular corollary 7.22 and Theorem 7.30. 
Note that the concept of displacement interpolation has been introduced by McCann in 
\cite{McCann}. We write this section
in the case of a Hadamard space $X$, but most of it stays true for any Polish geodesic space.

\begin{defi}
Define a {\it dynamical transport plan} between two measures 
$\mu_0,\mu_1\in\wass(X)$ as a probability measure $\mu$ on $C([t_0,t_1];X)$ such that for
$i=0,1$ the law at time $t_i$ of a random curve drawn with law $\mu$ is $\mu_i$. In other words
we ask ${e_{t_i}}_\#\mu=\mu_i$ where $e_t$ is
the map $C([t_0,t_1];X)\to X$ defined by $e_t(\gamma)=\gamma_t$.

The {\it cost} of $\mu$ is then
\[|\mu|^2=\int \ell(\gamma)^2 \mu(d\gamma)\]
where $\ell(\gamma)$ is the length of the curve $\gamma$ (possibly $+\infty$).
A dynamical transport plan is {\it optimal} if it minimizes the cost over all dynamical
transport plans
\end{defi}

It is known that a dynamical transport
plan exists. Morevover, if $\mu$ is an optimal dynamical transport plan then:
\begin{enumerate}
\item the law $(e_{t_0},e_{t_1})_\#\mu$
of the couple $(\gamma_{t_0},\gamma_{t_1})$ where $\gamma$ is a random curve drawn with law $\mu$,
is an optimal transport plan between $\mu_0$ and $\mu_1$,
\item $\mu$-almost all $\gamma\in C([t_0,t_1];X)$ are geodesics.
\end{enumerate}

Conversely, if $\Pi$ is a (non-dynamical) optimal transport plan, then one can construct
for any $t_0<t_1$ an optimal dynamical transport plan by the following construction.
Let $F:X^2\to C([t_0,t_1];X)$ be the map that sends a couple $(x,y)$ of points  to
the unique geodesic parametrized on $[t_0,t_1]$ that starts at $x$ and ends at $y$.
Then $\mu=F_\#\Pi$ is an optimal dynamical transport plan, whose associated
optimal transport plan is obviously $\Pi$.

Given a dynamical plan $\mu \in \mathscr{P}(\mathscr{G}^I(X))$ where $I$ is an arbitrary interval, 
$\mu$ is said to be \emph{$c$-cyclically monotone} if for any $s,t \in I$, the support of the plan 
$(e_s,e_t)_{\#}\mu$ is $c$-cyclically monotone (note that here the cost is continuous).
As soon as $\mu_t$ have finite second moments, this is equivalent to $\mu$ being optimal,
but cyclical monotonicity has the advantage of being well-defined without any 
integrability assumption.

The main use of optimal dynamical transport plans is that they define geodesic segments.
Indeed, let as before $e_t$ be the map $\gamma\mapsto \gamma_t$ defined on the set of continuous
curves. If $\mu$ is an optimal dynamical transport plan, consider the law $\mu_t={e_t}_\#\mu$
at time $t$ of a random geodesic drawn with law $\mu$:
then $(\mu_t)_{t_0\leq t\leq t_1}$
is a geodesic of $\wass(X)$. Displacement interpolation is the converse to this principle.

\begin{prop}[Displacement interpolation]\label{prop:interpol}
Given $(\mu_t)_{t_0\leq t\leq t_1}$ any geodesic segment in $\wass(X)$, there is
a probability measure $\mu$ on $\mathscr{G}^{t_0,t_1}(X)$ such that for all $t$,
$\mu_t={e_t}_\#\mu$.
\end{prop}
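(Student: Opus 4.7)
The plan is to construct $\mu$ as a weak limit of piecewise-geodesic dynamical plans along dyadic refinements of $[t_0, t_1]$. Setting $v := \dw(\mu_{t_0}, \mu_{t_1})/(t_1 - t_0)$, the geodesic hypothesis gives the fundamental identity $\dw(\mu_s, \mu_t) = v|s-t|$ on $[t_0, t_1]$. For each $n \geqslant 1$ and $0 \leqslant k < 2^n$, let $t_k^n = t_0 + k(t_1 - t_0)/2^n$ and choose an optimal transport plan $\Pi_k^n \in \Gamma_o(\mu_{t_k^n}, \mu_{t_{k+1}^n})$. Iterated application of the gluing lemma produces a probability $Q^n$ on $X^{2^n+1}$ whose marginals on consecutive factors are the $\Pi_k^n$. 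Pushing $Q^n$ forward through the map that sends $(x_0, \ldots, x_{2^n})$ to the continuous curve which is a constant-speed geodesic on each $[t_k^n, t_{k+1}^n]$ and passes through $x_k$ at time $t_k^n$ yields $\mu^n \in \mathscr{P}(C([t_0, t_1], X))$, with $(e_{t_k^n})_\# \mu^n = \mu_{t_k^n}$ by construction.

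Next I would extract a weakly convergent subsequence $\mu^{n_j} \rightharpoonup \mu$. Tightness of $(\mu^n)$ in the path space rests on two inputs. First, the marginal $(e_{t_0})_\# \mu^n = \mu_{t_0}$ is fixed and hence tight; combined with the uniform $L^2$-length bound
\[\int \ell(\gamma)^2 \,\mu^n(d\gamma) \leqslant v^2(t_1-t_0)^2,\]
obtained by Cauchy-Schwarz from $\int d(x_k,x_{k+1})^2\,d\Pi_k^n = v^2(t_1-t_0)^2/2^{2n}$, this confines $\mu^n$-typical curves to a compact subset of $X$ (using Hopf-Rinow: a Hadamard space is locally compact and complete, so closed bounded sets are compact). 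Second, the increment identity $\int d(\gamma_s,\gamma_t)^2\,\mu^n(d\gamma) = v^2|s-t|^2$, valid for all $s < t$ in $[t_0, t_1]$ (exactly on dyadic pairs from the geodesic hypothesis, and by the piecewise-affine parametrization within a single segment), together with a maximal-function argument on dyadic scales, yields the pathwise equicontinuity modulus needed for tightness in $\mathscr{P}(C([t_0, t_1], X))$.

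Once $\mu$ has been extracted, the two remaining claims follow in short order. The marginal identity $(e_t)_\# \mu = \mu_t$ holds for dyadic $t$ by passing to the limit in $(e_t)_\# \mu^n = \mu_t$ (continuity of $e_t$ on paths), and extends to all $t$ by density of dyadics plus continuity of $t \mapsto \mu_t$ and of $t \mapsto (e_t)_\# \mu$ (the latter follows from the $L^2$-length bound via dominated convergence). To see that $\mu$ is supported on $\mathscr{G}^{t_0, t_1}(X)$, compare two quantities: lower semi-continuity of $\ell$ under uniform convergence gives $\int \ell(\gamma)^2 \mu(d\gamma) \leqslant v^2(t_1-t_0)^2 = \dw(\mu_{t_0}, \mu_{t_1})^2$, while $(e_{t_0}, e_{t_1})_\# \mu \in \Gamma(\mu_{t_0}, \mu_{t_1})$ forces $\int d(\gamma_{t_0}, \gamma_{t_1})^2 \mu(d\gamma) \geqslant \dw(\mu_{t_0}, \mu_{t_1})^2$. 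Since $d(\gamma_{t_0}, \gamma_{t_1}) \leqslant \ell(\gamma)$ pointwise, both inequalities saturate $\mu$-a.s., so $\mu$-a.e.\ $\gamma$ is a minimizing curve. Running the same argument on each dyadic sub-interval pins the constant-speed parametrization with speed $v$, placing $\mu$ on $\mathscr{G}^{t_0, t_1}(X)$.

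The main obstacle is the tightness of $(\mu^n)$ in the path space: the length bound is easy, but converting the quadratic-in-$|s-t|$ increment estimate into a pointwise modulus of continuity on a set of arbitrarily large $\mu^n$-mass requires care, as individual piecewise-geodesic curves may have arbitrarily large speeds on short segments and the increment bound falls just short of Kolmogorov's criterion. A dyadic maximal-inequality argument bridges this gap; alternatively, one can work throughout on $\mathscr{P}(\mathscr{G}^{t_0,t_1}(X)) \simeq \mathscr{P}(X^2)$, where tightness reduces directly to second-moment control, and transfer the limit to the path space only at the end through the natural homeomorphism $\mathscr{G}^{t_0,t_1}(X) \simeq X^2$.
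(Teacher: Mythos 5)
Your proposal is correct, and it reconstructs essentially the argument the paper itself relies on: the paper gives no proof of this proposition but defers to Chapter 7 of Villani's book, where displacement interpolation for segments is obtained exactly by your scheme of dyadic refinement, gluing of optimal plans, Prokhorov compactness in the path space, and identification of the limit as concentrated on minimizing constant-speed curves via saturation of the Cauchy--Schwarz/triangle inequalities on dyadic sub-intervals. Two cosmetic points only: the uniform bound $\int d(\gamma_s,\gamma_t)^2\,\mu^n(d\gamma)\leqslant v^2|s-t|^2$ already satisfies the Kolmogorov--Chentsov tightness criterion (exponent $2=1+\beta$ with $\beta=1$), so no extra maximal-function bridge is needed, and the limit paths have constant speed but not necessarily speed $v$ (only the $L^2$-mean of the speeds is $v$), which is all that membership in $\mathscr{G}^{t_0,t_1}(X)$ requires.
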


If $\mu$ is a dynamical transport plan on $I$, for all $t_0,t_1\in I$
define the {\it time restriction} of $\mu$
to $[t_0,t_1]$ as $\mu^{t_0,t_1}={r_{t_0,t_1}}_\#\mu$ where $r_{t_0,t_1}(\gamma)$ is
the restriction of the curve $\gamma$ to the interval $[t_0,t_1]$.

Let $\mu$ be an optimal dynamical transport plan on $[0,1]$.
For all $t_0,t_1\in [0,1]$ the following holds (see \cite[Theorem 7.30]{Villani}):\begin{enumerate}
\item $\mu^{t_0,t_1}$ is an optimal dynamical transport plan,
\item if $X$ is non-branching and $(t_0,t_1)\neq(0,1)$, then $\mu^{t_0,t_1}$ is the {\it unique} 
(up to parametrization) optimal dynamical
transport plan between $\mu_{t_0}$ and $\mu_{t_1}$.
\end{enumerate}

\begin{rema}[The Wasserstein space is not non-positively curved]
It is well known that the non-positive curvature assumption on $X$ is not inherited by
$\wass(X)$ except if $X$ is the real line or a subset of it. Let us give a brief explanation
of this fact.

The affine structure makes the Wasserstein space contractible, so it cannot be locally
$\CAT(0)$ without being uniquely geodesic.
But as soon as $X$ is not reduced to a geodesic, there exists four
distinct points $x,x',y,z$ such that $d(x,y)=d(x,z)$ and $d(x',y)=d(x',z)$.
Between the measures $\mu=\frac12\delta_x+\frac12\delta_{x'}$ 
(where $\delta$'s are Dirac masses)
and $\nu=\frac12\delta_y+\frac12\delta_z$, all transport plans are optimal.
Each one of them defines a geodesic in $\wass(Y)$ from
$\mu$ to $\nu$, therefore $\wass(Y)$ is very far from being uniquely geodesic, and 
is in particular not non-positively curved.
\end{rema}

\section{Displacement interpolation for rays}\label{sec:displacement}

When $X$ is a Hadamard space,
the {\it geodesic boundary} of $\wass(X)$ is simply defined as
the set of asymptote classes of unitary geodesic rays:
\[\partial\wass(X)=\mathscr{R}_1(\wass(X))/\sim.\]

To study its structure, we need a good understanding of the geodesic rays in the Wasserstein
space, starting with the {\it displacement interpolation}
of rays. There is not much, but some work needed to extend the case of geodesic
segment; the crucial point being to handle the branching case.
Note that the case of complete geodesic is not different than that of the rays.

\begin{prop}[Displacement interpolation for rays in the non-branching case]\label{prop:displacement_nb}
If $X$ is non-branching, any geodesic ray $(\mu_t)$ of $\wass(X)$ admits a unique displacement interpolation,
that is a probability measure $\mu$ on $\mathscr{R}(X)$ such that $\mu_t$ is the law of the time
$t$ of a random ray having law $\mu$ (in other words, such that $\mu_t=(e_t)_\#\mu$).
\end{prop}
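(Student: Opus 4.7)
The plan is to assemble $\mu$ from the displacement interpolations on each finite segment $[0,T]$, using the non-branching hypothesis both to obtain uniqueness on segments and to glue them consistently.

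First, for each $T>0$, the restricted segment $(\mu_t)_{0\leq t\leq T}$ is itself a geodesic of $\wass(X)$, and the already-recalled displacement interpolation for segments yields an optimal dynamical transport plan $\nu^T\in\pr(\mathscr{G}^{0,T}(X))$ with $(e_t)_\#\nu^T=\mu_t$ for $t\in[0,T]$. Crucially, $\nu^T$ is \emph{unique}: viewing $[0,T]$ as a proper sub-interval of $[0,2T]$ (after the obvious rescaling to $[0,1]$), the non-branching uniqueness statement recalled in the excerpt applies, so that there is at most one optimal dynamical transport plan between $\mu_0$ and $\mu_T$. For any $T<T'$, the time restriction $(r_{0,T})_\#\nu^{T'}$ is also such an optimal plan, hence coincides with $\nu^T$.

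Second, I use this consistency to build an extension map. For $\nu^T$-almost every $\gamma\in\mathscr{G}^{0,T}(X)$, the curve $\gamma$ is the restriction of some $\gamma'\in\mathscr{G}^{0,T'}(X)$ for every $T'>T$, and non-branching makes this extension unique. Taking an exhausting sequence $T_n\nearrow+\infty$ and gluing, each such $\gamma$ extends to a unique ray $\tilde\gamma\in\radon(X)$, and the resulting map $\gamma\mapsto\tilde\gamma$ is Borel thanks to the natural identifications $\mathscr{G}^{0,T}(X)\simeq X^2$ and $\radon(X)\simeq X\times c\partial X$. The measure $\mu$ is then defined as the push-forward of $\nu^T$ under this map; the consistency above makes the definition independent of $T$.

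The identity $(e_t)_\#\mu=\mu_t$ is immediate since the extension map commutes with $e_t$ on $\mathscr{G}^{0,T}(X)$ for any $T>t$. Uniqueness of $\mu$ follows by the same mechanism: if $\mu'$ is any other candidate, $(r_{0,T})_\#\mu'$ is an optimal dynamical transport plan between $\mu_0$ and $\mu_T$, hence equals $\nu^T=(r_{0,T})_\#\mu$ for every $T$; since the family $\{r_{0,T}\}_{T>0}$ generates the Borel $\sigma$-algebra of $\radon(X)$, this forces $\mu'=\mu$. The main obstacle is not any individual step but the bookkeeping needed to make the extension map genuinely Borel on a full-$\nu^T$-measure subset; the Polish identifications of $\mathscr{G}^{0,T}(X)$ and $\radon(X)$ with simple product spaces, combined with non-branching, is exactly what makes this possible, and this is precisely the point at which the branching case will demand a different argument.
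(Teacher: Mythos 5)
Your argument is correct and takes essentially the same route as the paper: uniqueness and consistency of the finite-time optimal dynamical plans via the recalled non-branching restriction statement (the paper likewise uses that the ray is defined for all positive times, which is exactly your $[0,2T]$ trick), followed by a measurable extension of segments to rays --- which the paper phrases as the map $F(x,y)$ sending a pair of points to the unique maximal ray through them, continuous hence measurable under the identification $\mathscr{G}^{0,T}(X)\simeq X^2$ --- and a push-forward. The uniqueness step you make explicit is the one the paper leaves implicit, resting on its remark that the time restriction of any displacement interpolation of a geodesic is an optimal dynamical transport plan.
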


Note that since $(\mu_t)$ is a geodesic, the time restriction of $\mu$ to any segment
is an optimal dynamical transport plan.

\begin{proof}
Since $X$ is non-branching and $(\mu_t)$ is defined for all positive times, we know that there is
a unique optimal dynamical transport plan $\tilde\mu(T)$ from $\mu_0$ to $\mu_T$, and that
$\tilde\mu(T)=\tilde\mu(T')^{0,T}$ whenever $T<T'$ (see the discussion after Proposition 
\ref{prop:interpol}). The fact that $X$ is non-branching also
implies that for any two points $x,y\in X$ there is a unique maximal geodesic ray $F(x,y)$
starting at $x$ and passing at $y$ at time $1$; moreover this ray depends continuously
and therefore measurably on $(x,y)$. In other words, $(r_{0,1})_\#$ identifies 
$\mathscr{P}(\mathscr{G}^{0,T}(X))$ and $\mathscr{P}(\mathscr{G}^{0,1}(X))$.
 It follows that all $\tilde\mu(T)$ are uniquely defined
by $\tilde\mu(1)$ and that the probability measure $\mu=F_\#(\Pi)$, where
$\Pi$ is the optimal transport plan associated to $\tilde\mu(1)$, has the required property.
\end{proof}

When $X$ is branching, the previous proof fails for two reasons. The first one is that we cannot
determine $\tilde\mu(T)$ and define $\mu$ from $\tilde\mu(1)$ alone;
although Prokhorov's theorem will
do the trick. The second problem is that there may exist several optimal dynamical transport
plans corresponding to the same geodesic; but the set of these transports is always compact
and a diagonal process will solve the problem. However, we lose uniqueness in the process
and it would be interesting to single out one of the dynamical transport plan obtained.

\begin{prop}[Displacement interpolation for rays]\label{prop:displacement}
Any geodesic ray of $\wass(X)$ admits a displacement interpolation.
\end{prop}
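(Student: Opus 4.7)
The plan is to approximate by segment-level displacement interpolations, extract a weakly convergent subsequence via Prokhorov together with a diagonal trick, and then glue the limiting finite-time plans into a single measure on $\mathscr{R}(X)$ by a Kolmogorov-type extension. The complication signaled just before the statement is precisely that in a branching space, distinct optimal plans on $[0,n+1]$ may restrict to different plans on $[0,n]$, so we cannot define $\mu$ as the limit of one increasing coherent system as in the proof of Proposition \ref{prop:displacement_nb}; compactness of the set of finite-time plans must be used to compensate for the loss of coherence.

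For each integer $n\ge 1$, apply the segment displacement interpolation theorem to the geodesic segment $(\mu_t)_{0\le t\le n}$ to get $\tilde\mu_n\in\mathscr{P}(\mathscr{G}^{0,n}(X))$ with $(e_t)_\#\tilde\mu_n=\mu_t$ for $t\in[0,n]$. For $m\le n$, the time restriction $\tilde\mu_n^{0,m}$ is still an optimal dynamical transport plan between $\mu_0$ and $\mu_m$. The set $\mathrm{OT}_m$ of such optimal plans is weakly compact: its tightness follows from the Radon property of $\mu_0,\mu_m$ combined with the Hadamard assumption, under which the endpoint map $\mathscr{G}^{0,m}(X)\to X\times X$ is a homeomorphism, so that compact sets of endpoint pairs pull back to compact sets of segments; closedness under weak convergence follows from the standard fact that cyclical monotonicity is preserved under weak limits with a continuous cost.

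A diagonal extraction then yields a subsequence $n_k\to\infty$ along which, for every $m$, $\tilde\mu_{n_k}^{0,m}$ converges weakly to some $\nu_m\in\mathrm{OT}_m$. Passing to the limit in $\tilde\mu_{n_k}^{0,m}=(\tilde\mu_{n_k}^{0,n})^{0,m}$ and using continuity of the time-restriction map, we obtain the consistency relation $\nu_m=\nu_n^{0,m}$ whenever $m\le n$. Since $X$ is Polish, the consistent family of finite-dimensional marginals $(e_{t_1},\ldots,e_{t_k})_\#\nu_m$ (for $t_1<\cdots<t_k\le m$) determines via Kolmogorov's extension theorem a probability measure $\mu$ on $X^{[0,\infty)}$ whose restriction to every time interval $[0,m]$ coincides with $\nu_m$. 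By construction $(e_t)_\#\mu=\mu_t$ for every $t\ge 0$.

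The step I expect to be the main obstacle is showing that $\mu$ really lives on $\mathscr{R}(X)$ rather than on $X^{[0,\infty)}$. For this one uses that each $\nu_m$ is tight on $\mathscr{G}^{0,m}(X)$, which embeds continuously into $C([0,m],X)$ as a family of $s$-Lipschitz curves (with $s$ the common speed of the ray $(\mu_t)$), so that tightness combined with the Ascoli-type equicontinuity forces $\mu$ to be concentrated on $C([0,\infty),X)$; the concentration of each $\nu_m$ on constant-speed geodesics with the same speed $s$ then upgrades this to concentration on $\mathscr{R}(X)$. It is essentially here that we pay the price of uniqueness: any choice of diagonal subsequence yields a valid displacement interpolation, and there is no canonical one.
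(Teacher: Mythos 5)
Your first three steps (segment interpolations $\tilde\mu_n$, compactness of the sets of finite-time optimal dynamical plans inducing the given segment, diagonal extraction, and the consistency relation $\nu_m=\nu_n^{0,m}$) are exactly the argument of the paper. The genuine gap is in the final gluing step, and it sits precisely where you predicted the main obstacle would be. The claim your concentration argument rests on is false: the geodesics charged by a displacement interpolation of a Wasserstein ray of speed $s$ are \emph{not} all $s$-Lipschitz, nor of common speed $s$. The individual speeds $s(\gamma)$ are only constrained by $\int s(\gamma)^2\,\nu_m(d\gamma)=s^2$; they vary from curve to curve and can even be unbounded on the support (take $\mu_0=\sum_k m_k\delta_{x_k}$ and move the atoms along diverging branches with speeds $v_k\to\infty$, $\sum_k m_k v_k^2=1$). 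Equality of the speeds is a special feature of \emph{complete} geodesics (Proposition \ref{prop:completegeodesics}); it fails for rays, which is the case at hand. So there is no uniform equicontinuity and the Ascoli argument collapses. Moreover, even with some concentration statement in hand, Kolmogorov's theorem only yields a measure on the product $\sigma$-algebra of $X^{[0,\infty)}$, and neither $C([0,\infty),X)$ nor $\mathscr{R}(X)$ is measurable for that $\sigma$-algebra (a product-measurable set depends on countably many coordinates), so ``$\mu$ is concentrated on $\mathscr{R}(X)$'' is not even a well-formed event without further work.

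Both defects are repairable, and the paper's route shows the cleanest repair: instead of extending to the full product, apply Prokhorov's theorem on projective limits of measures (the paper cites \cite{Schwarz}) to the consistent system $(\nu_m)$ on the spaces $\mathscr{G}^{0,m}(X)$, whose projective limit is $\mathscr{R}(X)$; the only hypothesis to verify is tightness, namely a compact $K\subset\mathscr{R}(X)$ with $\nu_m(p^m(K))\geqslant 1-\ep$ for all $m$. For this no uniform speed bound on the whole support is needed: choose compacts $K_0,K_1\subset X$ with $\mu_0(K_0)\geqslant 1-\ep/2$, $\mu_1(K_1)\geqslant 1-\ep/2$ and take $K$ to be the set of rays starting in $K_0$ whose time-$1$ point lies in $K_1$; on $K$ the speed is bounded by $\sup\{d(x,y):x\in K_0,\,y\in K_1\}$, so $K$ is compact by Arzel\`a--Ascoli and properness of $X$, and the required bound holds for every $m>1$. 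Alternatively, within your framework you could replace $X^{[0,\infty)}$ by the countable skeleton $X^{\mathbb{N}}$ of integer-time values: a ray is determined by these values, the set of sequences with $d(x_m,x_n)=|m-n|\,d(x_0,x_1)$ is closed and carries the limit measure (each such condition holds $\nu_N$-almost surely), and such a sequence extends uniquely and continuously to a ray. Either way, some version of this compactness/skeleton argument must be supplied; as written, your last step does not go through.
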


\begin{proof}
Let $(\mu_t)_{t\geq 0}$ be a geodesic ray in $\wass(X)$ and
for all $T>0$, let $M(T)$ be the set of all optimal dynamical transport plans parametrized
on $[0,T]$ that induce the geodesic segment $(\mu_t)_{0\leq t\leq T}$.
It is a compact set according to \cite{Villani}, Corollary 7.22.
For all $T\in \mathbb{N}$, choose $\tilde\mu(T)$ in $M(T)$ and for all integer $0\leq T'\leq T$
define $\tilde\mu(T'|T)$ as the restriction $\tilde\mu(T)^{0,T'}$.

By a diagonal process, one can extract an increasing sequence $T_k$ of integers such that
for all $T'\in\mathbb{N}$, $\tilde\mu(T'|T_k)$ has a limit $\bar\mu(T')\in M(T')$ when $k\to+\infty$.
Since for all $T'<T$ the restriction maps
\[p^{T,T'} : \mathscr{G}^{0,T}\to \mathscr{G}^{0,T'}\]
are continuous, we get that
$p^{T,T'}(\bar\mu(T))=\bar\mu(T')$. We therefore have a projective system of measures; the
 projection map
\[p^T : \mathscr{R}(X)\to \mathscr{G}^{0,T}(X)\]
commutes with the $p^{T,T'}$ thus, according to a variant of Prokhorov's theorem \ref{theo:prokhorov} in the setting of projective system of measures \cite{Schwarz},
if we prove tightness, \emph{i.e.} that for all $\varepsilon>0$ there is a compact 
$K\subset\mathscr{R}(X)$ such that for all $T$,
$\bar\mu(T)(p^T K)\geq 1-\varepsilon$, then we can conclude
that there is a unique measure $\mu$ on $\mathscr{R}(X)$
such that $p^T_\#\mu=\bar\mu(T)$ for all $T$. This measure will have the required property
since $\bar\mu(T)\in M(T)$. 

Fix any $\varepsilon>0$. Let $K_0, K_1$ be compact subsets of $X$ such that
$\mu_i(K_i)\geq 1-\varepsilon/2$ for $i=0,1$. Let $K$ be the compact subset
of $\mathscr{R}(X)$
consisting in all geodesic rays starting in $K_0$ and whose time $1$ is in $K_1$.
Then for all
$T>1$, $\bar\mu(T)(p^T(K))\geq 1-\varepsilon$, as needed.
\end{proof}

The following result shall make displacement interpolation particularly useful.
\begin{lemm}[lifting]
Let $\mu$, $\sigma$ be probability measures on
$\mathscr{R}(X)$ (or similarly $\mathscr{G}^{T,T'}(X)$, ...) and denote
by $\mu_t=(e_t)_\#\mu$ and $\sigma_t=(e_t)_\#\sigma$ their time $t$.

Any transport plan $\Pi_t\in\Gamma(\mu_t,\sigma_t)$ admits a \emph{lift},
that is a transport plan $\Pi\in\Gamma(\mu,\sigma)$ such that
$\Pi_t=(e_t,e_t)_\#\Pi$.
\end{lemm}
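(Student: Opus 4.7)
The plan is to use measure disintegration (conditional probabilities) to glue $\mu$ and $\sigma$ together along the transport plan $\Pi_t$. Since $\mathscr{R}(X) \simeq X \times c\partial X$ is a Polish space (as recalled in the excerpt), and $e_t : \mathscr{R}(X) \to X$ is continuous hence Borel, the disintegration theorem applies.

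First I would disintegrate $\mu$ with respect to $e_t$: write
\[
\mu = \int_X \mu^x \, d\mu_t(x),
\]
where $(\mu^x)_{x \in X}$ is a $\mu_t$-almost surely defined family of probability measures on $\mathscr{R}(X)$ with $\mu^x$ concentrated on $e_t^{-1}(\{x\})$. Similarly, disintegrate $\sigma = \int_X \sigma^y \, d\sigma_t(y)$ with $\sigma^y$ concentrated on $e_t^{-1}(\{y\})$. Then define the candidate lift
\[
\Pi = \int_{X \times X} \mu^x \otimes \sigma^y \, d\Pi_t(x,y),
\]
viewed as a measure on $\mathscr{R}(X) \times \mathscr{R}(X)$.

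Next I would verify the three required properties. For the first marginal, denote by $\pi_1$ the projection onto the first factor; then $(\pi_1)_\# \Pi = \int \mu^x \, d\mu_t(x) = \mu$ since the first marginal of $\Pi_t$ is $\mu_t$. Symmetrically, the second marginal is $\sigma$. For the pushforward by $(e_t, e_t)$, one uses that $(e_t)_\# \mu^x = \delta_x$ and $(e_t)_\# \sigma^y = \delta_y$ (because of the support of the disintegrating measures), so
\[
(e_t, e_t)_\# \Pi = \int_{X \times X} \delta_x \otimes \delta_y \, d\Pi_t(x,y) = \Pi_t,
\]
as desired.

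The only nontrivial point is the measurability of $(x,y) \mapsto \mu^x \otimes \sigma^y$ in the weak topology, which is immediate from the measurability of $x \mapsto \mu^x$ and $y \mapsto \sigma^y$ provided by the disintegration theorem in Polish spaces; once this is granted, the displayed integrals make sense as Radon measures and the computations above are routine Fubini-type arguments. The same argument works verbatim with $\mathscr{R}(X)$ replaced by $\mathscr{G}^{T,T'}(X)$, $\mathscr{G}^{\mathbb{R}}(X)$, or any Polish space of curves carrying a continuous evaluation map $e_t$.
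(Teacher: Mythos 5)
Your proof is correct and is essentially the same as the paper's: both disintegrate $\mu$ and $\sigma$ along $e_t$ and define the lift as $\Pi=\int \mu^x\otimes\sigma^y\,d\Pi_t(x,y)$ (what the paper calls the ``most independent lift''), then check the marginals and the pushforward by $(e_t,e_t)$ exactly as you do.
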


Note that, as shall be apparent in the proof, the same holds for example
with the map $(e_s,e_t)$ when $s\neq t$.

\begin{proof}
Disintegrate $\mu$ along $\mu_t$: there is a family
$(\zeta_x)_{x\in X}$ of probability measures on $\mathscr{R}(X)$, each one
supported on the set $e_t^{-1}(x)$ of geodesic rays passing at $x$ at time $t$,
such that $\mu = \int \zeta_x \,\mu_t(dx)$ in the sense that
\[\mu(A)= \int_{\mathscr{R}(X)} \zeta_x(A)\mu_t(dx)\]
for all measurable $A$.
 Similarly, write $\sigma = \int \xi_y \,\sigma_t(dy)$ the disintegration
of $\sigma$ along $\sigma_t$.

Define then 
\[\Pi(A\times B) = \int_{\mathscr{R}(X)^2} \zeta_x(A)\xi_y(B)\Pi_t(dxdy).\]
It is a probability measure on $\mathscr{R}(X)^2$, and for any measurable 
sets $A,B$ in $X$ we have $\Pi(e_t^{-1}(A),e_t^{-1}(B))=\Pi_t(A\times B)$
because $\zeta_x\left(e_t^{-1}(A)\right)$ is $1$ if $x\in A$, $0$ otherwise
(and similarly for $\xi_y\left(e_t^{-1}(B)\right)$). A similar computation
gives that $\Pi$ has marginals $\mu$ and $\sigma$.
\end{proof}

Note that we gave the proof for the sake of completness, but one could simply apply twice
the gluing lemma, after noticing that the projection $e_t$ from $\mathscr{R}(X)$
to $X$ gives deterministic transport plans in $\Gamma(\mu,\mu_t)$ and
$\Gamma(\sigma,\sigma_t)$.

The lift of $\Pi_t$ need not be unique; the one constructed in the proof is very peculiar,
and can be called the \emph{most independent lift} of $\Pi_t$. It is well defined since
other disintegration families $(\zeta'_x)_x$ and $(\xi'_y)_y$ must coincide
with $(\zeta_x)_x$ and $(\xi_y)_y$ for $\mu_t$-almost all $x$ and $\sigma_t$-almost
all $y$ respectively.

The lifting lemma shall be used to translate the optimal transport problems
between $\mu_t$ and $\sigma_t$, where these measures move (usually along geodesics)
to transport problems between the fixed $\mu$ and $\sigma$, where it is the cost that moves.
In other words, we have just shown that minimizing $\int c(x,y)\, \Pi_t$ over the 
$\Pi_t\in\Gamma(\mu_t,\sigma_t)$
is the same than minimizing $\int c(\gamma_t,\beta_t)\, \Pi(d\gamma d\beta)$
over the $\Pi\in\Gamma(\mu,\sigma)$.

\section{Asymptotic measures}\label{sec:asymptotic}

Let us denote by $e_{\infty}$ the map defined by the formula
\begin{equation}
\begin{array}{rccc} 
e_{\infty} : & \mathscr{R}(X) &\longrightarrow & c\partial X \\
                  &         \gamma       & \longmapsto    & (  [\gamma^1],s(\gamma))
        \end{array}
\end{equation}       
where $\gamma^1$ is the unitary reparametrization of $\gamma$, $ [\gamma^1]$ is its
asymptote class and $s(\gamma)$ is the speed of $\gamma$.       
It is to be understood that whenever $s(\gamma)=0$, $[\gamma^1]$ can be taken arbitrarily in $\partial X$ and this choice does not matter.

\begin{defi}[asymptotic measure]\label{defi:asymptotic}
Let $(\mu_t)_{t \geq 0}$ be a geodesic ray in $\wass(X)$ and $\mu$ be a displacement interpolation
(so that $\mu_t= {e_t}_\#\mu$). We define the \emph{asymptotic measure} of the ray by
\[\mu_{\infty} := {e_{\infty}}_\#\mu.\]

We denote by $\mathscr{P}_1(c\partial X)$ the set of
probability measures $\nu$ on $c\partial X$ such that $\int v^2 \nu(dv)=1$.
\end{defi}

In the branching case, the dynamical optimal transport plan is not unique in general. 
Therefore, the asymptotic measure 
depends {\it a priori} on the choice of the dynamical optimal transport plan. We will see soon
that it is not the case.

Note that the speed of the geodesic $(\mu_t)$ is
\[\left(\int s^2(\gamma)\,\mu(d\gamma)\right)^{1/2}= \left(\int v^2 \,s_\#\mu(dv)\right)^{1/2}\]
and we denote it by $s(\mu)$. In particular, $\mathscr{P}_1(c\partial X)$ is the
set of measures that correspond to unit speed geodesics.
We shall use that the speed function $s$ defined in $\mathscr{R}(X)$
is in $L^2(\mu)$ several times.

The main result of this section is the following.
\begin{theo}[asymptotic formula]\label{theo:asymptotic}
Consider two geodesic rays $(\mu_t)_{t \geq 0}$ and $(\sigma_t)_{t \geq 0}$, let
$\mu$ and $\sigma$ be any of their displacement interpolations and $\mu_\infty$, $\sigma_\infty$
be the corresponding asymptotic measures.
Then $(\mu_t)$ and $(\sigma_t)$ are asymptotic if and only if $\mu_\infty=\sigma_\infty$, and we
have
\[\lim_{t\to\infty} \frac{\dw(\mu_t,\sigma_t) }{t}= \dwb (\mu_\infty,\sigma_\infty).\]
\end{theo}

Therefore: as in $X$ itself, the distance between two rays is either bounded or of linear growth,
and two displacement interpolations of the same ray define the same asymptotic measure.

The r\^ole of the asymptotic formula goes far beyond justifying Definition \ref{defi:asymptotic} 
in the branching case: it
gives us a very good control on geodesic rays of $\wass(X)$ on which several of our results 
rely. To cite one, the asymptotic formula is the main ingredient of Theorem \ref{theo:intro:rank} on the rank of $\wass(X)$.

For every $t\geq 0$, let $d_t$ be the function defined on $\mathscr{R}(X)\times\mathscr{R}(X)$
by $d_t(\gamma,\beta)=d(\gamma_t,\beta_t)$. We start with an implementation of a classical principle.

\begin{lemm}
The function $d_t$ is in $L^2(\Gamma(\mu,\sigma))$,
by which we mean that there is a constant $C=C(\mu,\sigma)$ such that for all $\Pi\in\Gamma(\mu,\sigma)$,
$\int d_t^2 \,\Pi\leq C$.
\end{lemm}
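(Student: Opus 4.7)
The plan is to push the problem down to $X$ via the time-$t$ evaluation map and then use that $\mu_t$ and $\sigma_t$, being laws at time $t$ of rays in $\wass(X)$, are themselves elements of $\wass(X)$, hence have finite second moments.

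More precisely, I would proceed as follows. Fix any $\Pi \in \Gamma(\mu,\sigma)$ and consider the push-forward $\Pi_t := (e_t,e_t)_\# \Pi$. Since $e_t \colon \mathscr{R}(X) \to X$ is measurable (in fact continuous), $\Pi_t$ is a Borel probability measure on $X \times X$, and its marginals are ${e_t}_\#\mu = \mu_t$ and ${e_t}_\#\sigma = \sigma_t$. In particular $\Pi_t \in \Gamma(\mu_t,\sigma_t)$, although it need not be optimal. By the change-of-variables formula,
\[
\int_{\mathscr{R}(X)^2} d_t(\gamma,\beta)^2 \, \Pi(d\gamma\,d\beta)
= \int_{X\times X} d(x,y)^2 \, \Pi_t(dx\,dy).
\]

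Now fix a base point $x_0 \in X$ and apply the crude triangle-inequality bound $d(x,y)^2 \leqslant 2 d(x_0,x)^2 + 2 d(x_0,y)^2$; integrating against $\Pi_t$ and using that the marginals of $\Pi_t$ are $\mu_t$ and $\sigma_t$, we get
\[
\int d(x,y)^2 \, \Pi_t(dx\,dy)
\leqslant 2\int d(x_0,x)^2\,\mu_t(dx) + 2\int d(x_0,y)^2\,\sigma_t(dy).
\]
The right-hand side no longer depends on $\Pi$, so it is the desired constant $C(\mu,\sigma)$, once we check it is finite. But this is immediate: $\mu_t$ lies in $\wass(X)$ (it is a point of a geodesic ray in the Wasserstein space), so $\int d(x_0,x)^2\,\mu_t(dx) < +\infty$, and the same holds for $\sigma_t$.

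There is essentially no obstacle; the whole content is the standard trick of lifting an integral against a transport plan to an integral against its marginals via the triangle inequality. If one wanted to make the dependence on $t$ explicit, one could alternatively bound $d(x_0,\gamma_t) \leqslant d(x_0,\gamma_0) + t\,s(\gamma)$ and use $s \in L^2(\mu)$ together with $\mu_0 \in \wass(X)$, which gives a bound of the form $C(\mu,\sigma)(1+t^2)$; but as stated the lemma asks only for finiteness at fixed $t$, which is what the displayed estimate above provides.
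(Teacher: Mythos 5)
Your proof is correct and is essentially the same as the paper's: both bound $d(\gamma_t,\beta_t)^2$ by $2d(x_0,\gamma_t)^2+2d(x_0,\beta_t)^2$ for a fixed base point, integrate, and use that only the marginals $\mu_t,\sigma_t$ (which lie in $\wass(X)$, the paper writing the bound as $2\dw^2(\mu_t,\delta_{x_0})+2\dw^2(\sigma_t,\delta_{x_0})$) enter the resulting constant, so it is independent of $\Pi$.
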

In the following, it will be of primary importance that $C$ does not depend on $\Pi$.

\begin{proof}
Denoting by $x$ any base point in $X$ we have
\begin{eqnarray*}
\int d_t^2\, \Pi &\leq& \int \left(d(\gamma_t,x)+d(x,\beta_t)\right)^2 \,\Pi(d\gamma d\beta)\\
  &\leq& 2\int \left( d^2(\gamma_t,x) + d^2(x,\beta_t)\right) \,\Pi(d\gamma d\beta)\\
  &=& 2\int d^2(\gamma_t,x)\, \mu(d\gamma) + 2\int d^2(x,\beta_t) \,\sigma(d\beta)\\
  &=& 2\dw^2(\mu_t,\delta_x)+2\dw^2(\sigma_t,\delta_x).
\end{eqnarray*}
\end{proof}

\begin{proof}[Proof of the asymptotic formula]
We shall use several times the following translation of the
convexity of the distance function: given any geodesic rays $\gamma,\beta$ of $X$,
the function
\[ f_t(\gamma,\beta):=\frac{d(\gamma_t,\beta_t)-d(\gamma_0,\beta_0)}{t}\]
is nondecreasing in $t$ and has limit $\db(\gamma,\beta)$.

Assume first that $\mu_\infty=\sigma_\infty$, and let us prove that $\dw(\mu_t,\sigma_t)$ is bounded.
The lifting lemma gives us a transport plan $\Pi\in\Gamma(\mu,\sigma)$ such that for all
$(\gamma,\beta)$ in its support, $\gamma_\infty=\beta_\infty$ (simply observe that the lifting
Lemma applies equally well to $t=\infty$, and lift the trivial transport
$(\id\times\id)_\#\mu_\infty$). If $f_t(\gamma,\beta)$ is positive for some $t$, then $\db(\gamma,\beta)>0$.
It follows that on $\supp\Pi$, $d_t\leq d_0$. Therefore:
\[\dw(\mu_t,\sigma_t)\leq \int d_t^2 \, \Pi \leq \int d_0^2 \, \Pi\]
which is bounded by the previous lemma.

Let now $\Pi_\infty$ be a transport plan from $\mu_\infty$ to $\sigma_\infty$ that is optimal with respect
to $d_\infty$. Such a minimizer exists 
since $d_\infty$ is non-negative and lower-semicontinuous with respect to the cone topology of 
$c\partial X$; note that taking an almost minimizer would be sufficient 
anyway. Denote by $\tilde\Pi$ a lift of $\Pi_\infty$ to $\Gamma(\mu,\sigma)$; then
\[\frac{\dw^2(\mu_t,\sigma_t)}{t^2}\leq \int \frac{d_t^2}{t^2}\, \tilde\Pi. \]
We have $2\geq d_\infty\geq f_t\geq f_1\geq -d_0$ for all $t\geq1$, so that
$t^{-1}d_t$ is bounded by $2+d_0$ and $- d_0$. We can thus apply the dominated convergence theorem,
which gives 
\[\limsup \frac{\dw^2(\mu_t,\sigma_t)}{t^2} \leq \int \db^2\, \tilde\Pi = \dwb^2(\mu_\infty,\sigma_\infty).\]

To prove the other inequality, we introduce $g_t:=\max(0,f_t)$. It is a
nondecreasing, nonnegative function with $\db$ as limit, and it satisfies
$t^2 g_t^2 \leq d_t^2$.

Let $\Pi_t$ be an optimal transport plan between $(\mu_t)$ and $(\sigma_t)$ and 
$\tilde\Pi_t$ be a lift to $\Gamma(\mu,\sigma)$, which by Prokhorov Theorem is compact in the weak topology.
Let $(t_k)_k$ be an increasing sequence such that 
\[\lim_k \dw(\mu_{t_k},\sigma_{t_k})=\liminf_t \dw(\mu_t,\sigma_t)\]
and $\tilde\Pi_{t_k}$ weakly converges to some $\tilde\Pi_\infty$.

For all $k'<k$, we have
\begin{eqnarray*}
\frac{\dw^2(\mu_{t_k},\sigma_{t_k})}{t_k^2} &=& \frac1{t_k^2}\int d_{t_k}^2\,\tilde\Pi_{t_k} \\
                                  &\geq& \int g_{t_k}^2\, \tilde\Pi_{t_k} \\
                                  &\geq& \int g_{t_{k'}}^2\, \tilde\Pi_{t_k}.
\end{eqnarray*}
Letting $k\to\infty$ and using that the $g_t$ are continuous, we obtain
\[\liminf_t \frac{\dw^2(\mu_t,\sigma_t)}{t^2} \geq \int g_{t_{k'}}^2\, \tilde\Pi_\infty\]
for all $k'$. But $g_{t_{k'}}\leq 2$ and the dominated convergence
theorem enables us to let $k'\to\infty$:
\[\liminf_t \frac{\dw^2(\mu_t,\sigma_t)}{t^2} \geq \int \db^2\, \tilde\Pi_\infty \geq
  \dw^2(\mu_\infty,\sigma_\infty).\]
This ends the proof of the asymptotic formula, and shows that if $(\mu_t)$ and $(\sigma_t)$ stay
at bounded distance then $\mu_\infty=\sigma_\infty$.
\end{proof}

\section{Complete geodesics and the rank}\label{sec:rank}

In this section we study complete geodesics in $\wass(X)$, in particular to
understand its rank. Recall that the \emph{rank} of a metric space
is the highest dimension of a Euclidean space that embeds
isometrically in it.

The main result of this section is the following.
\begin{theo}\label{theo:rank}
If $X$ is a visibility space, in particular if it is $\CAT(\kappa)$
with $\kappa<0$, then $\wass(X)$ has rank $1$.
\end{theo}

We expect more generally that for most, if not all Hadamard space $X$, the rank
of $\wass(X)$ is equal to the rank of $X$. Theorem \ref{theo:rank} 
is a first step in this direction.
Note that the fact that we ask the embedding to be nothing weaker than an isometry
and to be global is
important, as shown by example \ref{exem:cylinder} at the end of the section.

To prove Theorem \ref{theo:rank}, let us first see that the asymptotic measure 
of a complete geodesic is much more constrained than that of a mere ray.
\begin{prop}\label{prop:completegeodesics}
Let $(\mu_t)_{t\in\mathbb{R}}$ be a complete unit speed geodesic of $\wass(X)$, and $\mu$
be one of its displacement interpolations. Then $\mu$ is concentrated 
on the set of \emph{unit speed} geodesics of $X$.
\end{prop}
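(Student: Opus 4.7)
The strategy is to leverage the $d^2$-cyclical monotonicity of the couplings $(e_{-T},e_T)_{\#}\mu$ associated with the segments $(\mu_t)_{-T\leqslant t\leqslant T}$, combined with the cone formula \eqref{equa:conemetric} for the asymptotic distance on $c\partial X$. Writing $s(\gamma)$ for the constant speed of a geodesic $\gamma$, the unit speed hypothesis amounts to $\int s(\gamma)^2\,\mu(d\gamma)=1$, so the task reduces to showing that $s$ is constant equal to $1$ on $\supp\mu$.

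The heart of the argument is the $2$-cycle inequality applied to two curves $\gamma,\beta\in\supp\mu$: optimality of the plan $(e_{-T},e_T)_{\#}\mu$ yields
\[d^2(\gamma_{-T},\gamma_T)+d^2(\beta_{-T},\beta_T)\leqslant d^2(\gamma_{-T},\beta_T)+d^2(\beta_{-T},\gamma_T).\]
The left-hand side is exactly $4T^2(s(\gamma)^2+s(\beta)^2)$. For the right-hand side one crucially uses that $\gamma$ and $\beta$ are \emph{complete} geodesics, so that $t\mapsto\gamma_{-t}$ and $t\mapsto\beta_t$ are bona fide rays in $X$; convexity of the distance then ensures that $d(\gamma_{-T},\beta_T)/T$ converges to the asymptotic distance between these two rays, which by \eqref{equa:conemetric} equals
\[\sqrt{s(\gamma)^2+s(\beta)^2-2s(\gamma)s(\beta)\cos\angle(\gamma(-\infty),\beta(+\infty))},\]
and symmetrically for the other cross-term.

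Dividing the $2$-cycle inequality by $T^2$ and letting $T\to+\infty$, one would obtain (writing $s_1=s(\gamma)$, $s_2=s(\beta)$ and $A,B$ for the two Tits angles between the appropriate boundary points)
\[s_1^2+s_2^2+s_1 s_2(\cos A+\cos B)\leqslant 0.\]
Since $\cos A+\cos B\geqslant -2$ and $s_1^2+s_2^2\geqslant 2s_1s_2$ by AM--GM, both inequalities must saturate, forcing $s_1=s_2$ (and incidentally $A=B=\pi$). The degenerate case $s_2=0$ is handled by the same inequality, which collapses to $s_1^2\leqslant 0$. Hence $s$ is constant on $\supp\mu$, and the $L^2$-normalization $\int s^2\,d\mu=1$ pins this constant to $1$.

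The main obstacle is the middle step: the swapped points $\gamma_{-T}$ and $\beta_T$ lie on rays emanating from distinct base points $\gamma_0\neq\beta_0$, so one must check that this base-point discrepancy contributes only a sublinear term after division by $T$, so that the quotient really converges to the cone distance between the two cone points of $c\partial X$. This follows from the $\CAT(0)$ convexity of the distance combined with the fact that $\db$ depends only on speeds and boundary endpoints; everything else is a direct consequence of the cone formula and elementary inequalities.
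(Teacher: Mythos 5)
Your proof is correct and is essentially the paper's argument: both exploit the $d^2$-cyclical monotonicity of the coupling $(e_{-T},e_T)_\#\mu$ applied to two geodesics $\gamma,\beta\in\supp\mu$, divide the two-cycle inequality by $T^2$, let $T\to\infty$ to force $s(\gamma)=s(\beta)$, and conclude from $\int s^2\,d\mu=1$. The only divergence is in the cross terms: the paper bounds them crudely by the triangle inequality through a fixed basepoint, $d(\gamma_T,\beta_{-T})\leqslant d(\gamma_T,x)+d(x,\beta_{-T})=(s(\gamma)+s(\beta))T+o(T)$, whereas you compute their exact linear growth via $\db$ and the cone formula \eqref{equa:conemetric} — a harmless refinement (since $\cos\geqslant -1$ it reduces to the same estimate), at the cost of checking, as you do, that the asymptotic distance is independent of base points.
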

Note that here we do not use any assumption on $X$ (besides the existence of displacement
interpolations).

\begin{proof}
Let $\gamma,\beta$ be two geodesics in the support of $\mu$ and let $a=s(\gamma)$ and $b=s(\beta)$.
Fix some point
$x\in X$. Then we have the equivalents $d(x,\gamma_t)\sim at$ and
$d(x,\beta_t)\sim bt$ when $t\to\pm\infty$. In particular, we get
that 
\begin{eqnarray*}
d^2(\gamma_t,\beta_{-t}) &\leq& \left(d(\gamma_t,x)+d(x,\beta_{-t})\right)^2\\
                       &\leq& (a+b)^2t^2 + o(t^2)
\end{eqnarray*}
and similarly $d^2(\beta_t,\gamma_{-t})\leq (a+b)^2t^2 + o(t^2)$.
We also have $d^2(\gamma_t,\gamma_{-t})=4a^2t^2$ and $d^2(\beta_t,\beta_{-t})=4b^2t^2$.
But since $(\mu_t)$ is a geodesic, the transport plan $\Pi^t \in \Gamma (\mu_t,\mu_{-t})$ induced by  $\mu$ must
respect the cyclical monotonicity. In particular we have
\[d^2(\gamma_t,\gamma_{-t})+d^2(\beta_t,\beta_{-t})\leq d^2(\gamma_t,\beta_{-t})+d^2(\beta_t,\gamma_{-t}).\]
From this and letting $t\to\infty$ we get 
$4a^2 +4b^2\leq 2a^2+2b^2+4ab$, which is only possible when $a=b$.

We proved that the speed of geodesics in the support of $\mu$ is constant, and
since their square integrates (with respect to $\mu$) to $1$ we get
the desired conclusion.
\end{proof}

As a consequence, the asymptotic measure of a unit speed ray that can be extended to a complete
geodesic lies in the subset $\mathscr{P}(\partial X)$ of $\mathscr{P}_1(c\partial X)$
(where we identify a space $Y$ with the level $Y\times \{1\}$ in its cone).
\begin{prop}\label{prop:snowflaked}
If $X$ is a visibility space, the space $\mathscr{P}(\partial X)$
endowed with the metric $\dwb$ (where a ray is identified with its asymptote class)
contains no non-constant rectifiable curve.
\end{prop}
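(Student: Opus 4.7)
The plan is to exploit the very special form of $\db$ on $\partial X$ (viewed as the unit level of the cone $c\partial X$) when $X$ is a visibility space. Recall from the discussion after \eqref{equa:conemetric} that in a visibility space $\db(\gamma,\sigma)=s(\gamma)+s(\sigma)$ unless $\gamma\sim\sigma$; hence for $\xi,\xi'\in\partial X$ one has $\db(\xi,\xi')=2$ if $\xi\neq\xi'$ and $0$ otherwise. The immediate consequence is the formula
\[\dwb^2(\mu,\nu)=\min_{\Pi\in\Gamma(\mu,\nu)}\int\db^2\,d\Pi=4\min_{\Pi\in\Gamma(\mu,\nu)}\Pi(\{\xi\neq\xi'\})=4D(\mu,\nu)\]
on $\pr(\partial X)$, where $D(\mu,\nu):=\min_\Pi\Pi(\{\xi\neq\xi'\})$ is the total variation distance in its coupling formulation, itself a genuine metric on $\pr(\partial X)$. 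Thus on $\pr(\partial X)$ the metric $\dwb$ coincides, up to the factor $2$, with the square-root snowflake of $D$, which explains the label of the proposition. The remainder is the general principle that snowflaked metrics admit no non-constant rectifiable curve.

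To make that principle concrete here, let $c:[0,1]\to\pr(\partial X)$ be a continuous curve --- continuity in $\dwb$ and in $D$ are equivalent, since $\dwb=2\sqrt{D}$ is a homeomorphism between the two induced topologies --- and suppose for contradiction that $c_0\neq c_1$; set $\ep:=D(c_0,c_1)>0$. By uniform continuity on the compact interval $[0,1]$, for any $\eta>0$ one may choose a partition $0=t_0<t_1<\cdots<t_n=1$ with $D(c_{t_{i-1}},c_{t_i})\leqslant\eta$ for every $i$. Combining the elementary inequality $\sqrt{x}\geqslant x/\sqrt{\eta}$ (valid for $0\leqslant x\leqslant\eta$) with the triangle inequality for $D$, we obtain
\[\ell(c)\geqslant\sum_{i=1}^n\dwb(c_{t_{i-1}},c_{t_i})=2\sum_{i=1}^n\sqrt{D(c_{t_{i-1}},c_{t_i})}\geqslant\frac{2}{\sqrt{\eta}}\sum_{i=1}^n D(c_{t_{i-1}},c_{t_i})\geqslant\frac{2\ep}{\sqrt{\eta}}.\]
Letting $\eta\to 0$ contradicts rectifiability. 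Hence $c_0=c_1$, and applying the same reasoning to every sub-interval of $[0,1]$ shows that $c$ is in fact constant.

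The real content of the proof is concentrated in the first observation: once one recognizes that the visibility hypothesis collapses $\db|_{\partial X}$ to twice the discrete metric, the rest reduces to the well-known fact that the snowflake $\sqrt{d}$ of any non-trivial metric $d$ carries no non-constant rectifiable curve. I do not foresee a serious obstacle; the main point of attention is to use the genuine total variation metric on $\pr(\partial X)$, which is why the restriction to the unit level (as opposed to the larger space $\pr_1(c\partial X)$, where speeds may accumulate at $0$) is essential in order to produce the explicit factor $4$ in the formula above.
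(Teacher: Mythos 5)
Your proof is correct and follows essentially the same route as the paper: the visibility hypothesis makes $\db$ equal to $2$ on distinct boundary points, so $\dwb$ on $\mathscr{P}(\partial X)$ is (a constant times) the square-root snowflake of the total variation distance, and a snowflaked metric carries no non-constant rectifiable curve. The only difference is cosmetic: you prove the snowflake principle via uniform continuity and the triangle inequality for the base metric, whereas the paper picks partition points at equal base-distance using the intermediate value theorem; both arguments are valid and of the same nature.
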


\begin{proof}
First, the visibility assumption implies
that whenever $\gamma$ and $\beta$ are non asymptotic,
unit speed geodesic rays of $X$, we have $d_\infty(\gamma,\beta)=2$. Let us prove that for all displacement interpolation
$\mu,\sigma$ of rays in $\wass(X)$, assumed to be concentrated on $\mathscr{R}_1(X)$, we have
\begin{equation}
\dwb(\mu_\infty,\sigma_\infty)=2|\mu_\infty-\sigma_\infty|_v^{1/2} \label{eq:negcurvdistance}
\end{equation}
where $|\cdot|_v$ is the total variation norm. Let us recall that, using the Jordan measure
decomposition of $\mu_\infty-\sigma_\infty$, one can find positive (not probability)
measures $\mu'_\infty$, $\sigma'_\infty$
and $\nu$ such that $\mu'_\infty$ and $\sigma'_\infty$ are mutually singular,
$\mu_\infty=\nu+\mu'_\infty$ and $\sigma_\infty=\nu+\sigma'_\infty$. By definition,
$|\mu_\infty-\sigma_\infty|_v$ is the total mass of $\mu'_\infty$ (or, equivalently, 
of $\sigma'_\infty$). To find an optimal transport plan, one can simply leave the common
mass in place and move arbitrarily what is left, for example taking
$$\Pi=(\mathrm{Id},\mathrm{Id})_\#\nu+\mu'_\infty\otimes\sigma'_\infty$$
where $\otimes$ denotes the product measure \emph{normalized to have the same mass as
each factor}.
Since $\Pi$ moves a mass $|\mu_\infty-\sigma_\infty|_v$ by a distance of $2$,
it has cost $4|\mu_\infty-\sigma_\infty|_v$. More generally, due to the behavior
of $d_\infty$, any transport plan $\Pi'$ has cost 
\[4\Pi'(\{(\zeta,\xi) \,|\, \zeta\neq \xi \})\]
which cannot be smaller than the cost of $\Pi$.

The proposition now results from the more general following lemma, which is 
well-known at least in the case of Euclidean space.
\begin{lemm}[Snowflaked metrics]
Let $(Y,d)$ be any metric space and $\alpha<1$ be a positive number. Then
$(Y,d^\alpha)$ is a metric space not containing any non-constant rectifiable curve.
\end{lemm}

The fact that $d^\alpha$ is a metric comes from the inequality
$a+b\leq(a^\alpha+b^\alpha)^{1/\alpha}$ for positive $a,b$.

Let $c:I\to Y$ be a non-constant curve. Up to restriction and reparametrization, 
we can assume that $I=[0,1]$ and $c_0\neq c_1$. Take any positive integer $n$;
since $d$ is continuous, by the intermediate value theorem there are
numbers $t_1=0<t_2<\cdots<t_n<1$ such that $d(c_{t_{i-1}},c_{t_i})= d(c_0,c_1)/n$
and $d(c_{t_n},c_1)\geq d(c_0,c_1)/n$.
Denoting by $\ell$ the length according to the ``snowflaked'' metric
$d^\alpha$, we get that
\[\ell(c)\geq n\left(\frac{d(c_0,c_1)}{n}\right)^\alpha
  \geq d^\alpha(c_0,c_1) n^{1-\alpha}.\]
Since this holds for all $n$, $\ell(c)=\infty$ and $c$ is not rectifiable.
\end{proof}

\begin{proof}[Proof of Theorem \ref{theo:rank}]
Assume that there is an isometric embedding $\varphi:\mathbb{R}^2\to\wass(X)$.
Let $r^\theta$ be the ray starting at the origin and making an angle $\theta$
with some fixed direction. Then, since $r^\theta$ extends to a complete
geodesic, so does $\varphi\circ r^\theta$. The displacement interpolation
 $\mu^\theta$ of this ray of $\wass(X)$ must be concentrated on
$\mathscr{R}_1(X)$ by Proposition \ref{prop:completegeodesics}, so 
that $\mu^\theta_\infty\in\mathscr{P}(\partial X)$. But $\varphi$ being isometric,
the map $\theta\to\mu^\theta_\infty$ should be an isometric embedding
from the boundary of $\mathbb{R}^2$ (that is, the unit circle endowed with the 
chordal metric) to $(\mathscr{P}(\partial X),\dwb)$.
In particular its image would be a non-constant rectifiable curve, in contradiction
with Proposition \ref{prop:snowflaked}.
\end{proof}

Note that the same method yields more general results: we can rule out
the isometric embedding of Minkowski planes ($\mathbb{R}^2$ endowed with any
norm), and of their cones
of the form $\{x^2<\varepsilon y^2\}$ for any $\varepsilon>0$.
This contrasts with above-mentionned fact that even when $X$ is reduced to a line,
some Euclidean \emph{half}-cones of arbitrary dimension embeds isometrically
in $\wass(X)$.

\begin{exem}\label{exem:cylinder}
Let us remind an example of \cite{Kloeckner} showing that there are plenty
 of weaker Euclidean embedding in most Wasserstein spaces. 
Consider the set $\mathbb{R}^n_<$ of increasingly ordered real $n$-tuples.
The map
\begin{eqnarray*}
f: \mathbb{R}^n_<  &\to& \wass(\mathbb{R}) \\
   (x_1,\dots,x_n) &\mapsto& \frac1n \delta_{\sqrt{n} x_1}+\dots+\frac1n \delta_{\sqrt{n} x_n}
\end{eqnarray*}
is an isometric embedding. Assume $X$ contains a complete geodesic; then its Wasserstein
space contains a copy of $\wass(\mathbb{R})$ so that  $\mathbb{R}^n_<$ embeds
isometrically into $\wass(X)$. But $\mathbb{R}^n_<$ is an open half-cone invariant
under a $1$-parameter group of translations so that:
\begin{itemize}
\item $\wass(X)$ contains round Euclidean half-cones of arbitrary dimension,
      in particular $\mathbb{R}^{n-1}$ admits bi-Lipschitz emdedding in  $\wass(X)$
      for arbitrary $n$,
\item the cylinder $\mathbb{R}\times B^n(r)$ where $B^n(r)$ is a Euclidean ball of 
      arbitrary radius $r$ embeds isometrically in $\wass(X)$ for all $n$.
\end{itemize}
We see that under a very mild assumption, most weak ranks of $\wass(X)$ are infinite
(examples of weak ranks include the largest dimension of a bi-Lipshitz embedded
Euclidean space and the largest rank of a quasi-isometrically embedded $\mathbb{Z}^n$).
\end{exem}

Another customary definition of rank for Hadamard manifolds uses Jacobi fields;
one could try to extend the
notion of Jacobi fields in the setting of Wasserstein spaces, but our example
seems to indicate that with such a definition even $\wass(\mathbb{R})$ should have infinite 
rank. Let us give a precise result showing this.

\begin{prop}
Given any geodesic $(\mu_t)$ in $\wass(\mathbb{R})$ and any $n\in\mathbb{N}$,
there are independent unit vectors $v_1,\dots,v_n$ in $\mathbb{R}^n$ and a positive
$\varepsilon$ such that the set
\[C=\left\{t v_1 + s_2 v_2 +\dots + s_n v_n \,|\, t\in\mathbb{R}, 
           s_i\in [0,\varepsilon) \right\}\]
embeds isometrically in $\wass(\mathbb{R})$, with $tv_1$ sent to $\mu_t$ for all $t$.
\end{prop}

In particular, for any sensible metric definition of Jacobi fields, a geodesic admits
arbitrarily many independent pairwise commuting Jacobi fields.

\begin{proof}
It is easy to see and proved in \cite{Kloeckner} that there is some measure
$\mu$ on $\mathbb{R}$ such that $\mu_t=T_{t\#}\mu$ for all $t$, where $T_t$
is the translation $x\mapsto x+t$.

If $\mu$ has finite support, then an embedding similar to the one described
in example \ref{exem:cylinder} gives the conclusion. Assume that the support of
$\mu$ contains at least $n-1$ points $x_2,\dots,x_n$ and choose disjoint neighborhoods
$U_2,\dots,U_n$ of these points.
Let $X_2,\dots,X_n$ be smooth vector fields on $\mathbb{R}$,
each $X_i$ having support in $U_i$, define
$\Phi(t,s_2,\dots,s_n)(x)=x+t+\sum_i s_i X_i(x)$ and consider the map 
\begin{eqnarray*}
f : \mathbb{R}\times [0,+\infty)^{n-1} &\to& \wass(\mathbb{R}) \\
    (t,s_2,\dots,s_n) &\mapsto& \Phi(t,s_2,\dots,s_n)_\# \mu
\end{eqnarray*}
Using convexity of the cost and cyclical monotonocity, one sees that optimal transport
plans in $\wass(\mathbb{R})$ are exactly those where no inversion of mass occurs
(i.e. whose support does not contain pairs $(x,y)$ and $(x',y')$ such that
$x>x'$ and $y<y'$). 
In particular, the push forward by the map
$\Phi(t,s_2,\dots,s_n)$
defines an optimal transport plan between a measure and its image
as soon as $s_i$'s are small enough. It follows that for small enough $s_i$'s
we have 
\[\begin{split}
\dw\big(f(t,(s_i)),f(t',(s_i'))\big)^2 = & (t-t')^2 
  + \sum_i(s_i-s_i')^2\left(\int_{U_i}X_i^2(x)\,\mu(dx)\right)  \\
  &+2(t-t')\sum_i (s_i-s'_i) \int_{U_i} X_i(x) \,\mu(dx)
\end{split}\]
which is a quadratic expression in $(t-t',s_2-s_2',\dots,s_n-s_n')$.
Modifying $f$ by a linear change of coordinates, we get the desired embedding.
\end{proof}

Note that when the support of $\mu$ contains at least $n$ points,
we can in fact construct a $n$ dimensional uniformly large Euclidean neighborhood of $(\mu_t)$.
However, we only get the announced ``corner'' when the support of $\mu$ is too small,
in particular when $\mu$ is a Dirac mass.

As a last remark, let us point out that the Jacobi fields constructed here have well-defined
flows only for small times. The study (and definition) of fully integrable Jacobi
fields on $\wass(X)$ could lead to an understanding of its flats even when $X$ has higher rank.

\section{The geodesic boundary and its cone topology}\label{part:boundary}

In this section, we adapt to $\wass (X)$ the classical construction of the 
cone topology on the geodesic compactification of Hadamard space, 
see for instance \cite{Ballmann}. We introduce  this 
topology on $\overline{\wass(X)}=\wass(X)\cup\partial\wass(X)$. We shall prove in 
Proposition \ref{prop:conetop} that the cone topology turns $\overline{\wass(X)}$ 
into a first-countable Hausdorff space and that the topology induced on 
$\wass(X)$ coincides with the topology derived from the Wasserstein metric.

In the next section, we shall show  in Theorem \ref{theo:homeobound} that $\partial\wass(X)$
is homeomorphic to 
$$\pr_1(c\partial X)= \left\{\zeta \in \pr(c\partial X) ; \int s^2 \,\zeta(d\xi,ds)=1\right\}$$
endowed with the weak topology. In Corollary \ref{coro:homeo}, we rewrite the above result in 
terms of Wasserstein space over $c \partial X$.

From now on, we will use the following notations.
Let $(Y,d)$ be a geodesic space and $y \in Y$. We set $\mathscr{R}_{y}(Y)$ (respectively 
$\mathscr{R}_{y,1}(Y)$) the set of geodesic rays in $Y$ starting at $y$ 
(respectively  the set of unitary geodesic rays starting at $y$). These sets are closed subsets of 
$\mathscr{R}(Y)$ endowed with the topology of uniform 
convergence on compact subsets.

 The cone topology 
 on $\overline{\wass(X)}=\wass(X)\cup\partial\wass(X)$
 is defined by using as a basis the open sets of $\wass(X)$ together with
$$ U(x, \xi,R , \ep)= \big\{ \theta \in \overline{\wass(X)} ; \theta \not\in \overline{B}(\delta_x, R), 
\dw( (\mu_{\delta_x,\theta})_R , (\mu_{\delta_x,
\xi})_R) < \ep \big\}$$
where $x \in X$ is a fixed point, $\xi$ runs over $\partial\wass(X)$, $R$ and $\varepsilon$ run
over $(0,+\infty)$ and $\mu_{\delta_x,\theta}$ is the unitary geodesic between $\delta_x$
 and $\theta$ 
(existence and uniqueness follow from Lemma
\ref{lemm:existray}).

The main properties of the cone topology on $\overline{\wass(X)}$ are gathered together in the following proposition.
\begin{prop}\label{prop:conetop} The cone topology on  $\overline{\wass(X)}$ is well-defined and is independent of the choice of the basepoint $\delta_x$. Moreover, endowed with this topology,  $\overline{\wass(X)}$ is a first-countable Hausdorff space. By definition, the topology induced on
$\wass(X)$ coincides with the topology derived from the Wasserstein metric.
\end{prop}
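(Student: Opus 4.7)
My plan is to mirror the classical construction of the cone topology on the compactification of a Hadamard space, exploiting that although $\wass(X)$ is not globally non-positively curved, geodesics emanating from a Dirac mass $\delta_x$ are unique (by Lemma~\ref{lemm:existray}) and inherit enough structure from $X$ to make the classical arguments go through. The four assertions---that the candidate family defines a topology, that it does not depend on the choice of basepoint, that it is Hausdorff and first-countable, and that it restricts to the Wasserstein topology on $\wass(X)$---will be proved in separate steps, each using that $\theta \mapsto (\mu_{\delta_x,\theta})_R$ is a well-defined map on $\{\theta : \dw(\delta_x,\theta) > R\} \cup \partial\wass(X)$ with the continuity properties one expects.

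\emph{Basis axiom.} Given $\theta \in U(x,\xi_1,R_1,\ep_1) \cap U(x,\xi_2,R_2,\ep_2)$, I would produce a smaller basis element at $\theta$. If $\theta \in \partial\wass(X)$, setting $\ep'_i := \ep_i - \dw((\mu_{\delta_x,\theta})_{R_i},(\mu_{\delta_x,\xi_i})_{R_i}) > 0$ and using the triangle inequality in $\wass(X)$ yields $U(x,\theta,R_i,\ep'_i)\subset U(x,\xi_i,R_i,\ep_i)$, and the intersection is again a basis element (take $R=\max R_i$, $\ep=\min\ep'_i$, using monotonicity argued below). If $\theta \in \wass(X)$, a small Wasserstein ball about $\theta$ works, by the continuity of $\theta'\mapsto (\mu_{\delta_x,\theta'})_R$ on $\wass(X)\setminus\overline{B}(\delta_x,R)$---a consequence of the explicit formula $(\mu_{\delta_x,\theta'})_t = (\phi_{t/\dw(\delta_x,\theta')})_{\#}\theta'$ where $\phi_s(y)$ is the point at fraction $s$ along the geodesic from $x$ to $y$.

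\emph{Hausdorff and first-countability.} Two interior points are separated by Wasserstein balls. An interior point $\theta$ and a boundary point $\xi$ are separated by $B(\theta,r)$ and $U(x,\xi,\dw(\delta_x,\theta)+r,r)$ for small $r>0$. Two distinct boundary points $\xi\neq\xi'$, with asymptotic measures $\mu_\infty\neq\sigma_\infty$, satisfy $\dwb(\mu_\infty,\sigma_\infty)>0$ by the asymptotic formula (Theorem~\ref{theo:asymptotic}), and hence $\dw((\mu_{\delta_x,\xi})_R,(\mu_{\delta_x,\xi'})_R)\to\infty$; choosing $R$ large and $\ep$ small separates them. First-countability follows from the Wasserstein metric at interior points and from the explicit countable basis $\{U(x,\xi,n,1/n)\}_{n\in\mathbb{N}}$ at boundary points.

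\emph{Basepoint independence.} This is the main obstacle. The idea is that for each $\xi \in \partial\wass(X)$, the rays $\mu_{\delta_x,\xi}$ and $\mu_{\delta_{x'},\xi}$ share the same asymptote class, and hence by the asymptotic formula they share the same asymptotic measure, which forces $\dw((\mu_{\delta_x,\xi})_t,(\mu_{\delta_{x'},\xi})_t)$ to remain bounded as $t \to \infty$. Applied simultaneously to the central ray toward $\xi$ and to the rays toward varying $\theta$ in a neighborhood of $\xi$, this boundedness allows one, given $U(x,\xi,R,\ep)$, to produce $R',\ep'$ with $U(x',\xi,R',\ep')\subset U(x,\xi,R,\ep)$ (and symmetrically), proving the two bases generate the same topology. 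Finally, the last assertion of the proposition is immediate from the definition: the Wasserstein opens are declared open, and the sets $U(x,\xi,R,\ep)\cap \wass(X)$ are themselves Wasserstein-open by the continuity used in the basis step, so the induced topology is exactly the Wasserstein one.
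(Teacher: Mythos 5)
Your overall scheme (basis axiom, Hausdorff and first countability, basepoint independence, restriction to the Wasserstein topology) is the same as the paper's, and the first two steps are essentially fine modulo one ingredient you never supply: the monotonicity of $t\mapsto \dw\big((\mu_{\delta_x,\theta})_t,(\mu_{\delta_x,\theta'})_t\big)/t$ for two geodesics issued from a common Dirac mass, which you invoke as ``monotonicity argued below'' but never argue. Since $\wass(X)$ is not $\CAT(0)$, this is not free; the paper proves it (Lemma \ref{lemm:triangle}) by lifting a coupling of the two dynamical plans and integrating the comparison-angle inequality of $X$ against it --- the fact that one endpoint is a Dirac mass is what makes the integrated law of cosines close up. You need this both for your ``take $R=\max R_i$, $\ep=\min\ep_i'$'' reduction in the basis step and for your countable neighborhood basis $\{U(x,\xi,n,1/n)\}$, since control at time $n$ must be transported back to the earlier time $R<n$.

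The genuine gap is in the basepoint-independence step, which you correctly single out as the main obstacle but then dispose of with the asymptotic formula alone. Boundedness of $\dw\big((\mu_{\delta_x,\xi})_t,(\mu_{\delta_{x'},\xi})_t\big)$ concerns only the two central rays; the inclusion $U(x',\xi,R',\ep')\subset U(x,\eta,R,\ep)$ must hold for \emph{every} $\theta$ in the left-hand set, and most such $\theta$ are interior points of $\wass(X)$, for which there is no ray, no asymptotic measure, and hence nothing the asymptotic formula can say. Even for boundary $\theta$, ``bounded at infinity'' (with a constant depending on $\theta$) does not convert into the required finite-time estimate $\dw\big((\mu_{\delta_x,\theta})_R,(\mu_{\delta_x,\eta})_R\big)<\ep$: in a Hadamard space this conversion is done by convexity of $t\mapsto d(\gamma_t,\sigma_t)$ along two geodesics, which is exactly what fails in $\wass(X)$. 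The paper substitutes for it Lemma \ref{lemm:suppi} (the inequality $\widetilde{\angle}_{\mu_0\mu_t\delta_x}+\widetilde{\angle}_{\mu_T\mu_t\delta_x}\geq\pi$ for triangles with a Dirac vertex) and Proposition \ref{prop:convray} (the geodesics from $\delta_x$ to points far along a geodesic issued from $\delta_y$ are uniformly close to one another, and to the limiting ray, on $[0,R]$), and then bounds $\dw\big((\mu_{\delta_x,\theta})_R,(\mu_{\delta_x,\eta})_R\big)$ by a four-term triangle inequality through $(\mu_{\delta_x,\Theta_S})_R$ and $(\mu_{\delta_x,\Xi_S})_R$, where $\Theta=\mu_{\delta_{x'},\theta}$ and $\Xi=\mu_{\delta_{x'},\xi}$. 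Without these comparison results (or an equivalent substitute), your sentence ``this boundedness allows one to produce $R',\ep'$'' is precisely the statement that remains to be proved.
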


\begin{rema}\label{rema:firstc} We emphasize that the topology induced 
 on $\partial \wass(X)$ by the cone topology coincides 
 with the quotient topology induced by the topology of uniform convergence on compact 
 subsets on 
 the set of unitary rays in $\wass(X)$.
 
Moreover, since $\partial \wass(X)$ endowed with the cone topology is first-countable, 
continuity and sequential continuity are equivalent in this topological space. 
 \end{rema}
 The scheme of proof is the same as in the 
nonpositively curved case. However, to get the result, we first need to generalize to our setting 
some 
properties related to nonpositive curvature.

\begin{lemm}\label{lemm:existray} Given $x \in X$,  the set of unitary rays in  $\wass(X)$ starting at $\delta_x$ is in one-to-one correspondence with the set
$\pr_1(c\partial X)$. Moreover, for any $\xi \in \partial \wass(X)$, there 
exists a unique unitary ray starting at $\delta_x$ and belonging to $\xi$.
\end{lemm}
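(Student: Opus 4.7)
The plan is to exploit the canonical identification $e_\infty : \mathscr{R}_x(X) \to c\partial X$ that sends $\gamma$ to $([\gamma^1], s(\gamma))$. Because $X$ is Hadamard, through each base point $x$ and each $\zeta \in \partial X$ there passes a unique unit ray, and rescaling by an arbitrary non-negative speed upgrades this into a bijection—indeed a homeomorphism—already recorded in the introductory section as $\mathscr{R}(X) \simeq X \times c\partial X$. The whole argument then reduces to transferring probability measures back and forth along this identification.

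For existence I would start from $\nu \in \pr_1(c\partial X)$, pull it back via $e_\infty^{-1}$ to a probability measure $\tilde\nu$ on $\mathscr{R}_x(X)$, and set $\mu_t := (e_t)_\#\tilde\nu$. Uniqueness of the transport plan out of the Dirac mass $\delta_x$ immediately yields
\[
\dw^2(\mu_0,\mu_t) = \int d^2(x,\gamma_t)\,\tilde\nu(d\gamma) = t^2 \int s^2\,\nu(d\zeta\,ds) = t^2,
\]
while the coupling $(e_s,e_t)_\#\tilde\nu$ provides the bound $\dw(\mu_s,\mu_t) \leqslant |t-s|$. The reverse triangle inequality then forces equality for all $s,t$, so $(\mu_t)$ is a unit-speed geodesic ray, and its asymptotic measure equals $(e_\infty)_\#\tilde\nu = \nu$ by construction.

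For injectivity I would start from a unit ray $(\mu_t)$ based at $\delta_x$ and pick any displacement interpolation $\mu$ provided by Proposition \ref{prop:displacement}. Since $(e_0)_\#\mu = \delta_x$, the measure $\mu$ must be supported on $\mathscr{R}_x(X)$. Theorem \ref{theo:asymptotic} guarantees that the asymptotic measure $\nu := (e_\infty)_\#\mu$ is independent of the choice of $\mu$, and since $e_\infty$ restricted to $\mathscr{R}_x(X)$ is a bijection, $\mu$ is itself recovered as the pushforward of $\nu$ by $e_\infty^{-1}$. Consequently $\mu$, and hence the ray $(\mu_t) = (e_t)_\#\mu$, is entirely determined by $\nu$. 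Combined with existence this yields the claimed bijection between $\mathscr{R}_{\delta_x,1}(\wass(X))$ and $\pr_1(c\partial X)$.

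For the second assertion, given $\xi \in \partial\wass(X)$ I would choose any representative unit ray $\gamma$ and let $\nu$ be its asymptotic measure; by Theorem \ref{theo:asymptotic} this $\nu$ only depends on $\xi$. Applying existence yields a unit ray from $\delta_x$ with asymptotic measure $\nu$, and the asymptotic formula places it in the class $\xi$; uniqueness within $\xi$ is then part of the bijection just proved. The main subtlety of the whole argument lies in the existence step, where one must verify that the naive candidate $(\mu_t)$ is a genuine geodesic rather than merely a Lipschitz curve; this is precisely the point at which the uniqueness of the optimal plan out of the Dirac mass $\delta_x$, combined with the normalization $\int s^2\,d\nu = 1$, becomes essential.
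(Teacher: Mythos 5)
Your proposal is correct and follows essentially the same route as the paper: identify rays of $X$ issued from $x$ with $c\partial X$ via the homeomorphism $\phi_x=e_\infty|_{\mathscr{R}_x(X)}$, use the uniqueness of the coupling with a Dirac mass to check the constructed curve is a unit geodesic, use Proposition \ref{prop:displacement} for surjectivity, and invoke the asymptotic formula (Theorem \ref{theo:asymptotic}) both for well-definedness/injectivity and for the statement about the class $\xi$. The only difference is cosmetic: you spell out the triangle-inequality verification that $(e_t)_\#\tilde\nu$ is a geodesic, which the paper leaves implicit.
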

\begin{proof} Recall that there exists a unique transport plan between a Dirac mass and any 
measure in $\wass(X)$. Since there is a unique geodesic between two given points in $X$, the 
same property remains true for dynamical transportation plans. Using the previous remarks, we get that any $\mu \in \pr (\mathscr{R}_{x}(X))$ such that
$ \int s^2(\gamma) \mu(d\gamma)< + \infty$ induces a ray starting at $\delta_x$.  Moreover, since displacement interpolation always exists (see Proposition \ref{prop:displacement}),  the set $\mathscr{R}_{\delta_x,1}(\wass(X))$ is in one-to-one correspondence with the unitary dynamical transportation plans starting at $\delta_x$, namely with the measures $\mu \in  \pr (\mathscr{R}_{x}(X))$ such that $ \int s^2(\gamma) \mu(d\gamma)=1$. Now, since $X$ is a 
Hadamard space, we recall that $\mathscr{R}(X)$ is homeomorphic to $ X \times c\partial X$ (where the left coordinate is the initial location of the ray). Therefore, for any $x \in X$, the 
previous map induces a homeomorphism
\begin{equation}\label{equa:phix}
 \phi_x : \mathscr{R}_{x}(X) \longrightarrow c \partial X.
 \end{equation}
This gives us a one-to-one 
correspondence between the set of unitary dynamical transportation plans starting at $\delta_x$ and the set $\pr_1(c\partial X)$. 

Given $\xi  \in \partial \wass(X)$, consider a unit ray $(\mu_t)$ in
$\wass(X)$ belonging to $\xi$ and $\mu_{\infty} \in \pr_1(c\partial X)$ its asymptotic measure.  We claim that ${\phi_x^{-1}}_{\#}\mu_{\infty}$ is the unique ray starting at $\delta_x$ and belonging to $\xi$. Indeed,  ${\phi_x^{-1}}_{\#}\mu_{\infty}$ and 
$(\mu_t)$ have the same asymptotic measure, thus they are asymptotic thanks to the asymptotic 
formula (Theorem \ref{theo:asymptotic}). The asymptotic formula also implies that two asymptotic rays (starting at $\delta_x$) have the same asymptotic measure, thus they are equal thanks to the first part of the lemma.
\end{proof}
\begin{lemm}\label{lemm:triangle} Let $(\mu_t),(\sigma_t)$ be two unitary geodesics (possibly rays) in
 $\wass(X)$ starting at $\delta_x$. Then, the comparison angle $\widetilde{\angle}_{\mu_{s}\delta_x\sigma_{t}} $ at $\delta_x$ of
 the triangle $\Delta (\delta_x, \mu_s,\sigma_t)$ is a nondecreasing function of $s$ 
  and $t$. Consequently, the map $t \longrightarrow \dw(\mu_t, \sigma_t)/t$ is a 
  nondecreasing function as well.
 \end{lemm}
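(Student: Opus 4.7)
The plan is to lift the problem from $\wass(X)$ to the CAT(0) space $X$ via displacement interpolation, exploiting the fact that the base point is a Dirac mass so that transport plans from it are uniquely determined. Concretely, write the two unit rays as $\mu_t=(e_t)_\#\mu$ and $\sigma_t=(e_t)_\#\sigma$ for unique displacement interpolations $\mu,\sigma\in\mathscr{P}(\mathscr{R}_x(X))$ with the unit-speed normalisation $\int s(\gamma)^2\,\mu(d\gamma) = \int s(\beta)^2\,\sigma(d\beta)=1$ (Proposition~\ref{prop:displacement} plus uniqueness of the dynamical plan starting at $\delta_x$).

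Fix any $s'\geqslant s$, $t'\geqslant t$. Choose an optimal transport plan $\Pi^{s',t'}\in\Gamma_o(\mu_{s'},\sigma_{t'})$ and use the lifting lemma to produce $\tilde\Pi\in\Gamma(\mu,\sigma)$ with $(e_{s'},e_{t'})_\#\tilde\Pi=\Pi^{s',t'}$. The heart of the argument is the identity
\[
 A(s,t)\;:=\;\frac{s^2+t^2-\int d^2(\gamma_s,\beta_t)\,\tilde\Pi(d\gamma d\beta)}{2st}
 \;=\;\int s(\gamma)s(\beta)\cos\widetilde\angle_{\gamma_s x\beta_t}\,\tilde\Pi(d\gamma d\beta),
\]
which is obtained by expanding $d^2(\gamma_s,\beta_t)$ via the Euclidean law of cosines (using that $\gamma$ and $\beta$ both start at $x$), and using the two unit-speed normalisations to cancel the $s^2$ and $t^2$ contributions against $s^2\int s(\gamma)^2$ and $t^2\int s(\beta)^2$.

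Now three inequalities finish the job. First, by optimality at $(s',t')$, $\cos\widetilde\angle_{\mu_{s'}\delta_x\sigma_{t'}}=A(s',t')$. Second, since $\tilde\Pi$ projects to some (not necessarily optimal) plan at $(s,t)$, $\cos\widetilde\angle_{\mu_s\delta_x\sigma_t}\geqslant A(s,t)$. Third, for each pair of rays $(\gamma,\beta)$ in $\mathscr{R}_x(X)$ the scalar comparison angle $\widetilde\angle_{\gamma_s x\beta_t}$ is nondecreasing in $s,t$ by the classical CAT($0$) monotonicity in $X$, so $\cos\widetilde\angle_{\gamma_s x\beta_t}$ is nonincreasing; multiplying by the nonnegative factor $s(\gamma)s(\beta)$ and integrating preserves this, giving $A(s,t)\geqslant A(s',t')$. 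Chaining the three yields $\cos\widetilde\angle_{\mu_s\delta_x\sigma_t}\geqslant\cos\widetilde\angle_{\mu_{s'}\delta_x\sigma_{t'}}$, which is the desired monotonicity of the comparison angle.

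The consequence on $t\mapsto\dw(\mu_t,\sigma_t)/t$ is then immediate by specialising to $s=t$: $\cos\widetilde\angle_{\mu_t\delta_x\sigma_t}=1-\dw^2(\mu_t,\sigma_t)/(2t^2)$, whose monotone decrease in $t$ translates exactly into the monotone increase of $\dw(\mu_t,\sigma_t)/t$. The only delicate point is the linear algebra around the identity for $A(s,t)$: one must be careful that the unit-speed normalisations of $\mu$ and $\sigma$ are preserved under the lift $\tilde\Pi$ (true because the marginals of $\tilde\Pi$ are exactly $\mu$ and $\sigma$), so this is really where the hypothesis ``start at a Dirac'' does the work — it is what allows a CAT($0$) fact in $X$ to be integrated into a CAT($0$)-flavoured inequality at the basepoint $\delta_x$, despite $\wass(X)$ itself not being CAT($0$).
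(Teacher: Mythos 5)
Your proof is correct and follows essentially the same route as the paper: lift an optimal plan at the larger time pair via the lifting lemma, use it as a (suboptimal) competitor at the smaller times, and integrate the pointwise CAT(0) comparison at the common base point $x$ (your angle-monotonicity step is equivalent to the paper's comparison-triangle distance inequality), exploiting the Dirac start so that all lifted curves issue from $x$. The only cosmetic difference is your bookkeeping through the quantity $A(s,t)$ and the law of cosines with speeds, which matches the paper's computation after noting $d(x,\gamma_{s'})=\tfrac{s'}{s}d(x,\gamma_s)$.
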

\begin{proof}
We set $d_m,d_s \leq + \infty$ the length of $(\mu_t)$ and $(\sigma_t)$ respectively and $\mu$, $\sigma$ the corresponding optimal dynamical plans. Thanks to the lifting lemma, 
we set $\Theta \in \Gamma
(\mu, \sigma)$ a dynamical plan such that, for given $s \leq d_m$ and $t \leq d_s$,
 $(e_s,e_t)_{\#}\Theta$ is an optimal plan. By definition of the Wasserstein distance, we get, for any $s' 
 \leq s$ and $t'\leq t$, the following 
 estimate
 $$ \dw^2(\mu_{s'},\sigma_{t'}) \leq \int d^2(\gamma(s'),\gamma'(t'))
\, \Theta(d\gamma, d\gamma').$$
 Now, the fact that $X$ is nonpositively curved yields
 \begin{eqnarray*} d^2(\gamma(s'),\gamma'(t')) & \leq  &
 \frac{s'^2}{s^2}d^2(x, \gamma(s)) + \frac{t'^2}{t^2}d^2(x,
 \gamma'(t)) \\ & & {}- 2 \frac{s't'}{st}d(x,\gamma(s))d(x,\gamma'(t))\cos 
 \widetilde{\angle}_{\gamma(s)x\gamma'(t)}.
 \end{eqnarray*}
 where $\widetilde{\angle}_{\gamma(s)x\gamma'(t)}$ is the comparison angle at $x$ (here, we use the 
 fact that the initial measure is a Dirac mass). 
 By integrating this inequality against $\Theta$, we get
 \begin{multline*} \dw^2(\mu_{s'},\sigma_{t'})  \leq 
 s'^2 + t'^2 \\-2 \frac{s't'}{st} \int d(x,\gamma(s))d(x,\gamma'(t))\cos 
 \widetilde{\angle}_{\gamma(s)x\gamma'(t)}
 \,\Theta(d\gamma,d\gamma').
 \end{multline*}
 We conclude by noticing that the inequality above is an equality when $s=s'$ and $t=t'$, so we 
 get 
  $$  \dw^2(\mu_{s'},\sigma_{t'}) \leq s'^2 + t'^2
 -2 s't'\cos \widetilde{\angle}_{\mu_{s}\delta_x\sigma_{t}}$$
 which is equivalent to the property $ \widetilde{\angle}_{\mu_{s'}\delta_x\sigma_{t'}} \leq  \widetilde{\angle}_{\mu_{s}\delta_x\sigma_{t}} $. The remaining statement follows readily.
  \end{proof}
  
 \begin{lemm}\label{lemm:suppi} Let $(\mu_t)$ be a unitary geodesic, possibly a ray, starting at 
$\delta_y$ and $\delta_x 
\neq \delta_y$. For any $\theta \in \wass(X)$ such that $\theta  \neq \delta_x,
\delta_y$, the comparison angle at $\theta$ satisfies
$$ \cos \widetilde{\angle}_{\delta_x\theta\delta_y} = \frac{1}{\dw(\delta_x,\theta) \dw
(\delta_y,\theta)} \int d(x,z)d(y,z) \cos \widetilde{\angle}_{xzy} \,\theta(dz).$$
Moreover, given two  nonnegative numbers $s\neq t$, the following inequality holds for $0<t<T$
$$ \widetilde{\angle}_{\mu_0\mu_t\delta_x} + \widetilde{\angle}_{\mu_T\mu_t\delta_x} \geq \pi.$$
\end{lemm}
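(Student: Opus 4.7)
The first identity is a direct bookkeeping with the Euclidean law of cosines applied on both sides. On the Wasserstein side, $2\dw(\delta_x,\theta)\dw(\delta_y,\theta)\cos\widetilde{\angle}_{\delta_x\theta\delta_y}=\dw^2(\delta_x,\theta)+\dw^2(\delta_y,\theta)-\dw^2(\delta_x,\delta_y)$. The uniqueness of transport plans with a Dirac marginal turns $\dw^2(\delta_x,\theta)$ into $\int d^2(x,z)\,\theta(dz)$ (and similarly for $\delta_y$), while $\dw(\delta_x,\delta_y)=d(x,y)$; since $\theta$ is a probability measure, $d^2(x,y)$ is itself the integral of the constant $d^2(x,y)$ against $\theta$. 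The pointwise law of cosines in $X$ then identifies the integrand with $2d(x,z)d(y,z)\cos\widetilde{\angle}_{xzy}$, and dividing by the normalization yields the formula.

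For the angle-sum inequality, I would introduce a displacement interpolation $\mu\in\pr(\mathscr{R}_y(X))$ of $(\mu_s)$, so that $\mu_s=(e_s)_\#\mu$ for every $s\geq 0$ and $\int s(\gamma)^2\,\mu(d\gamma)=1$. Applying the first identity to $\theta=\mu_t$ (with $\mu_0=\delta_y$) provides an integral formula for $\cos\widetilde{\angle}_{\mu_0\mu_t\delta_x}$. To treat $\cos\widetilde{\angle}_{\mu_T\mu_t\delta_x}$, where $\mu_T$ is no longer Dirac, the key observation is that the coupling $(e_t,e_T)_\#\mu\in\Gamma(\mu_t,\mu_T)$ is optimal: its cost $(T-t)^2\int s(\gamma)^2\,\mu(d\gamma)$ equals $\dw^2(\mu_t,\mu_T)=(T-t)^2$. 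This allows one to rewrite each of the three squared Wasserstein distances in the law of cosines at $\mu_t$ as an integral against the single measure $\mu$; the pointwise Euclidean law of cosines in $X$ applied under the integral then yields a formula structurally identical to the first, but with $y$ replaced by $\gamma_T$ inside the cosine.

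Summing the two expressions and simplifying with $d(y,\gamma_t)=ts(\gamma)$, $d(\gamma_t,\gamma_T)=(T-t)s(\gamma)$, $\dw(\delta_y,\mu_t)=t$, $\dw(\mu_t,\mu_T)=T-t$, the normalizations coalesce into $\dw(\delta_x,\mu_t)^{-1}$ and the integrand becomes proportional to $\cos\widetilde{\angle}_{y\gamma_t x}+\cos\widetilde{\angle}_{\gamma_T\gamma_t x}$. Since $\gamma_t$ sits strictly between $\gamma_0=y$ and $\gamma_T$ on the geodesic $\gamma$, the Alexandrov directions at $\gamma_t$ toward $y$ and toward $\gamma_T$ are opposite, so the triangle inequality for Alexandrov angles combined with the domination $\widetilde{\angle}\geq\angle$ in the $\CAT(0)$ space $X$ yields $\widetilde{\angle}_{y\gamma_t x}+\widetilde{\angle}_{\gamma_T\gamma_t x}\geq\pi$; equivalently the parenthesized sum of cosines is nonpositive. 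Integrating gives $\cos\widetilde{\angle}_{\mu_0\mu_t\delta_x}+\cos\widetilde{\angle}_{\mu_T\mu_t\delta_x}\leq 0$, which is the claimed angle inequality. The main obstacle I anticipate is precisely the extension of the first formula to the non-Dirac vertex $\mu_T$; what makes this work is that the displacement interpolation simultaneously provides an optimal coupling between $\mu_t$ and $\mu_T$ \emph{and} realizes mass elements as honest geodesics of $X$, so that the Wasserstein law of cosines can be rewritten as a pointwise integral of Euclidean laws of cosines.
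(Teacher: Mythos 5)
Your proposal is correct and follows essentially the same route as the paper: the first identity by integrating the pointwise law of cosines against $\theta$ (using the uniqueness of couplings with a Dirac mass), and the second by lifting to a displacement interpolation $\mu$, noting that $(e_t,e_T)_\#\mu$ realizes $\dw^2(\mu_t,\mu_T)=(T-t)^2$ so the comparison angle $\widetilde{\angle}_{\mu_T\mu_t\delta_x}$ admits the same kind of integral representation, and then invoking the $\CAT(0)$ inequality $\widetilde{\angle}_{y\gamma_t x}+\widetilde{\angle}_{\gamma_T\gamma_t x}\geq\pi$ together with the speed relations $d(y,\gamma_t)=ts(\gamma)$, $d(\gamma_t,\gamma_T)=(T-t)s(\gamma)$. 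The only (immaterial) difference is that you sum the two integral formulas and use nonpositivity of the integrand, whereas the paper substitutes the pointwise angle bound into the formula for $\cos\widetilde{\angle}_{\mu_T\mu_t\delta_x}$ and recognizes $-\cos\widetilde{\angle}_{\mu_0\mu_t\delta_x}$.
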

\begin{proof}
For any $z \in X$, the following equality holds 
$$ d^2(x,y) = d^2 (x,z) + d^2(y,z) - 2 d(x,z)d(y,z) \cos \widetilde{\angle}_{xzy}.$$
By integrating this inequality against $\theta$, we get the first statement by definition of the comparison 
angle.
Let $\mu$ be the unique optimal dynamical coupling that induces $(\mu_t)$. The first step of the 
proof is to get the equality below:
\begin{multline}\label{equa:angleint}
\cos \widetilde{\angle}_{\mu_T\mu_t\delta_x}  =\\ \frac{1}{\dw(\mu_T, \mu_t) \dw(\mu_t,
\delta_x)} \int 
d(\gamma(t),\gamma(T))d(\gamma(t),x) \cos \widetilde{\angle}_{x 
\gamma(t)\gamma(T)} \, \mu (d \gamma)
\end{multline}
For $\gamma \in \supp \,\mu$, the following equality holds
\begin{multline*}
d^2(\gamma(T), x ) =
 d^2(\gamma(t),x) + d^2(\gamma(t),\gamma(T)) \\- 2 d(\gamma(t),x)d(\gamma(t),
\gamma(T))\cos \widetilde{\angle}_{x\gamma(t)\gamma(T)}.
\end{multline*}
By integrating this equality against $\mu$, we get (\ref{equa:angleint}). Now, using that $X$ is 
nonpositively curved, we have $ \widetilde{\angle}_{x\gamma(t)\gamma(T)}  + 
\widetilde{\angle}_{x\gamma(t)y} \geq \pi$. This gives 
\begin{multline*}
\cos \widetilde{\angle}_{\mu_T\mu_t\delta_x}   \\
        \begin{array}{rl}
        \leq   & \frac{-1}{\dw(\mu_T, \mu_t) \dw(\mu_t,\delta_x)} 
           \int d(\gamma(t),\gamma(T))d(\gamma(t),x) \cos \widetilde{\angle}_{x 
\gamma(t)y} \, \mu (d \gamma)   \\
  & \\
  \leq &\frac{-1}{(T-t)\dw(\mu_t,\delta_x)} 
   \int \frac{T-t}{t}d(\gamma(t),y)d(\gamma(t),x) \cos \widetilde{\angle}_{x 
\gamma(t)y} \, \mu (d \gamma) \\
 & \\
   \leq &\frac{-1}{\dw(\mu_t,\delta_y)\dw(\mu_t,\delta_x)} 
   \int d(\gamma(t),y)d(\gamma(t),x) \cos \widetilde{\angle}_{x 
\gamma(t)y} \, \mu (d \gamma) \\
 & \\
 \leq &- \cos \widetilde{\angle}_{\mu_0\mu_t\delta_x}
  \end{array}
  \end{multline*}
  where the last inequality follows from the first statement and the result is proved.
\end{proof}

As a consequence, we get the following result.

\begin{prop}\label{prop:convray}  Given $\ep>0$, $a>0$, and $R>0$, there exists a constant $T= 
T(\ep,a,R)>0$ such that the followings holds: for any $x,y \in X$ such that $d(x,y)=a$ and a 
unitary 
geodesic (possibly a ray)  $(\mu_t)$ of length greater than $T$ and starting at $\delta_y$, if 
$(\sigma^s_t)$ is the unitary geodesic from $\delta_x$ to $\mu_s$ then
$$ \dw(\sigma^s_R,\sigma^{s'}_R) <\ep$$
 for any $s'>s>T$.

 In particular, if $(\mu_t)$ is a ray and $s$ goes to infinity, $(\sigma^s)_{s \geq 0}$ converges uniformly on compact subsets to the 
 unitary ray $\mu_{\delta_x,
 \xi}$ where $\xi$ is the asymptote class of $(\mu_t)$ 
 (see Figure \ref{fig:bound}).
\end{prop}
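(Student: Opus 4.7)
The plan is to mimic the classical Hadamard-space argument for the convergence of geodesics to a point going to infinity, replacing the actual $\CAT(0)$ inequality (unavailable in $\wass(X)$) by the two comparison-angle estimates at a Dirac base that we do have at our disposal: the monotonicity of Lemma \ref{lemm:triangle} and the ``obtuse-angle'' estimate of Lemma \ref{lemm:suppi}.

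First I would estimate the comparison angle at $\mu_s$ in the triangle $(\delta_y,\mu_s,\delta_x)$. Its sides are $s=\dw(\delta_y,\mu_s)$, $a=\dw(\delta_x,\delta_y)$, and $d_s:=\dw(\delta_x,\mu_s)\in[s-a,s+a]$, so the Euclidean law of cosines at $\mu_s$ gives $\cos \widetilde{\angle}_{\mu_0\mu_s\delta_x}\ge (s-a)/(s+a)$, whence $\widetilde{\angle}_{\mu_0\mu_s\delta_x}\le C\sqrt{a/s}$ for large $s$. Then for $s<s'$, Lemma \ref{lemm:suppi} yields $\widetilde{\angle}_{\mu_0\mu_s\delta_x}+\widetilde{\angle}_{\mu_{s'}\mu_s\delta_x}\ge\pi$, and since the three comparison angles of the Euclidean triangle $(\delta_x,\mu_s,\mu_{s'})$ sum to $\pi$, this forces $\widetilde{\angle}_{\mu_s\delta_x\mu_{s'}}\le\widetilde{\angle}_{\mu_0\mu_s\delta_x}\le C\sqrt{a/s}$.

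Now Lemma \ref{lemm:triangle}, applied to the two unit geodesics $\sigma^s$ and $\sigma^{s'}$ starting at $\delta_x$ and reaching $\mu_s$ at parameter $d_s$, resp. $\mu_{s'}$ at $d_{s'}$, gives for every $R\le \min(d_s,d_{s'})$
\[
\widetilde{\angle}_{\sigma^s_R\delta_x\sigma^{s'}_R}\le\widetilde{\angle}_{\mu_s\delta_x\mu_{s'}}\le C\sqrt{a/s}.
\]
The triangle $(\delta_x,\sigma^s_R,\sigma^{s'}_R)$ is isosceles of legs $R$, so its base equals $2R\sin(\widetilde{\angle}/2)\le R\widetilde{\angle}\le CR\sqrt{a/s}$. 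Choosing $T(\varepsilon,a,R)$ so that $T\ge R+a$ (to ensure $R\le d_s, d_{s'}$) and $CR\sqrt{a/T}<\varepsilon$ then gives the Cauchy estimate.

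For the last sentence, the monotonicity of $t\mapsto\dw(\sigma^s_t,\sigma^{s'}_t)/t$ (Lemma \ref{lemm:triangle}) upgrades the Cauchy estimate on the time-$R$ slice to a uniform Cauchy estimate on $[0,R]$, so $(\sigma^s)$ has a uniform-on-compact limit $\sigma^\infty$ which is itself a unit ray from $\delta_x$. By Lemma \ref{lemm:existray}, there is a unique ray from $\delta_x$ in any given asymptote class, so it suffices to show $\sigma^\infty\sim\mu$. I would prove this by passing to the limit in the displacement interpolations: each $\tilde\sigma^s$ is the pushforward of $\tilde\mu$ by the map sending a ray $\gamma\in\mathscr{R}_y(X)$ to the geodesic from $x$ to $\gamma_s$ reparametrized on $[0,d_s]$, and this geodesic converges (uniformly on compact time-intervals) as $s\to\infty$ to the ray from $x$ asymptotic to $\gamma_\infty$ with speed $s(\gamma)$; thus $(e_\infty)_\#\tilde\sigma^\infty=(e_\infty)_\#\tilde\mu=\mu_\infty$, and the asymptotic formula of Theorem \ref{theo:asymptotic} identifies $\sigma^\infty$ with $\mu_{\delta_x,\xi}$. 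The main obstacle is precisely this last identification, because in the branching case $\tilde\sigma^s$ is not unique; this is however made tractable by the uniqueness of displacement interpolation from a Dirac mass (used in the proof of Lemma \ref{lemm:existray}).
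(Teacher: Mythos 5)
Your first half is the paper's own argument, only quantified: the paper likewise makes $\widetilde{\angle}_{\delta_y\mu_s\delta_x}$ small for large $s$, invokes Lemma \ref{lemm:suppi} to force $\widetilde{\angle}_{\mu_{s'}\mu_s\delta_x}$ close to $\pi$, deduces that $\widetilde{\angle}_{\mu_s\delta_x\mu_{s'}}$ is small, and transfers this to the time-$R$ slice via the monotonicity of Lemma \ref{lemm:triangle}; your explicit bound $C\sqrt{a/s}$ and the choice $T\geq R+a$ are fine and make the dependence $T(\ep,a,R)$ visible.

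Where you genuinely diverge is the convergence statement. The paper stays inside the comparison-angle machinery: it uses the asymptotic formula only to get the uniform bound $\dw((\mu_{\delta_x,\xi})_t,\mu_t)\leq C$, then repeats the angle argument with $\mu_{\delta_x,\xi}$ in place of $\mu$ (the point $\mu_{s'}$ is at distance roughly $s'$ from $\delta_x$ but within $C$ of the ray), so that $\dw((\mu_{\delta_x,\xi})_R,\sigma^{s'}_R)$ is small, and concludes by the triangle inequality together with the first part. You instead pass to a limit ray $\sigma^\infty$ via completeness and identify it with $\mu_{\delta_x,\xi}$ at the level of displacement interpolations: since the plan from a Dirac mass is unique, $\tilde\sigma^s$ is the pushforward of $\tilde\mu$ by $\gamma\mapsto[x,\gamma_s]$, and letting $s\to\infty$ you get $G_\infty{}_\#\tilde\mu$, which is exactly the measure ${\phi_x^{-1}}_\#\mu_\infty$ that Lemma \ref{lemm:existray} uses to build $\mu_{\delta_x,\xi}$; equality of asymptotic measures plus Theorem \ref{theo:asymptotic} and the uniqueness in Lemma \ref{lemm:existray} then finish. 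This is correct and arguably more structural (it explains \emph{why} the limit is $\mu_{\delta_x,\xi}$, and in fact could be run directly on the slices $\sigma^s_t=(e_t)_\#G_s{}_\#\tilde\mu$ without ever extracting a Cauchy limit), but it buys this at the cost of two routine facts you only assert: that in a proper Hadamard space the segments $[x,\gamma_s]$, suitably reparametrized, converge uniformly on compact time intervals to the ray from $x$ asymptotic to $\gamma$ with speed $s(\gamma)$, and that pointwise convergence of the maps $G_s\to G_\infty$ yields weak convergence of the pushforwards (plus the second-moment control needed to upgrade to $\dw$-convergence, or, as you do, the identification $(e_t)_\#\tilde\sigma^\infty=\sigma^\infty_t$). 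The paper's route avoids all measure-theoretic limits and reuses the first half verbatim, which is why it is shorter.
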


\begin{figure}[htp]\begin{center}
\input{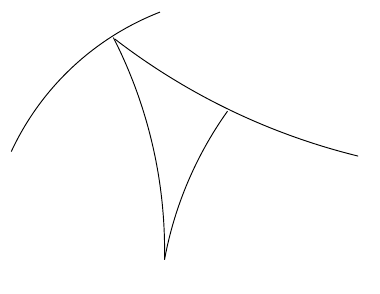_t}
\caption{Uniform convergence of $\sigma^s$ on compact subsets.}
\label{fig:bound}
\end{center}\end{figure}
\begin{proof}
Under these assumptions, the comparison angle $\widetilde{\angle}_{\delta_y\mu_s\delta_x}$ is 
arbitrary 
small 
provided $s$ is sufficiently large. Consequently, thanks to Lemma \ref{lemm:suppi}, 
$\widetilde{\angle}_{\mu_{s'}\mu_s\delta_x}$ is close to $\pi$;  therefore the comparison angle 
$\widetilde{\angle}_{\mu_s\delta_x \mu_{s'}}$ is small. This gives the first part of the result since 
$$ \widetilde{\angle}_{\sigma^s_R\delta_x \sigma^{s'}_R} \leq 
\widetilde{\angle}_{\mu_s\delta_x\mu_{s'}}$$
thanks to Lemma \ref{lemm:triangle}. Using Lemma \ref{lemm:triangle} again, it only remains to prove the pointwise convergence of $(\sigma^s)_{s \geq 0}$ to 
$\mu_{\delta_x,\xi}$. Thanks to the asymptotic formula, there exists $C>0$ such that 
\begin{equation}\label{equa:bound}
\dw
((\mu_{\delta_x,\xi})_t, \mu_t) \leq C
\end{equation}
 for any nonnegative number $t$. Finally, we conclude by using $s'$ 
sufficiently large and the bound 
$$
 \dw( (\mu_{\delta_x,\xi})_R, \sigma^{s}_R) \leq  
  \dw( (\mu_{\delta_x,\xi})_R, 
\sigma^{s'}_R) +  
\dw( \sigma^{s}_R, \sigma^{s'}_R)
$$
where the same reasoning as above and (\ref{equa:bound}) show that the first term on the right-hand side is small provided $s'$ is large.
\end{proof}
Now, we can prove that the topology above is well-defined and does not depend on 
the choice of the base point $\delta_x$. This the content of the lemma below.

\begin{lemm}Given two positive numbers $R, \ep$ and $y \in X$, $ \xi \in U(x, \eta,R , \ep)\cap 
\partial \wass(X)$, there exists $S, \ep' >0$ such that
$$ U(y,\xi, S, \ep') \subset U(x, \eta,R , \ep).$$
\end{lemm}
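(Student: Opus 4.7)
The plan is to reduce the set inclusion to a three-term triangle inequality at time $R$ in $\wass(X)$. Set $a=d(x,y)$ and $3\delta := \epsilon - \dw((\mu_{\delta_x,\xi})_R,(\mu_{\delta_x,\eta})_R)$, which is positive since $\xi\in U(x,\eta,R,\epsilon)$. It then suffices to choose $S,\epsilon'>0$ such that (i) any $\theta$ with $\dw(\delta_y,\theta)>S$ satisfies $\dw(\delta_x,\theta)>R$, and (ii) $\theta\in U(y,\xi,S,\epsilon')$ implies $\dw((\mu_{\delta_x,\theta})_R,(\mu_{\delta_x,\xi})_R)\leq 3\delta$, since a last triangle inequality then gives the desired bound $<\epsilon$.

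First I would apply Proposition \ref{prop:convray} with parameters $(\delta,a,R)$ to obtain $T_0=T(\delta,a,R)$ and choose $S>\max(T_0,R+a)$, so that (i) is immediate from $\dw(\delta_x,\theta)\geq \dw(\delta_y,\theta)-a>S-a>R$. Let $\tau^{\xi}$ denote the unique unit ray from $\delta_y$ asymptotic to $\xi$, provided by Lemma \ref{lemm:existray}, and let $\tau^{\theta}$ be analogously the unit ray from $\delta_y$ asymptotic to $\theta$ (for $\theta\in\partial\wass(X)$) or the unit geodesic from $\delta_y$ to $\theta$ (for $\theta\in\wass(X)$). Writing $\sigma^{\xi,S},\sigma^{\theta,S}$ for the unit geodesics from $\delta_x$ to $\tau^{\xi}_S,\tau^{\theta}_S$ respectively, Proposition \ref{prop:convray} delivers the two outer bounds
\[ \dw(\sigma^{\xi,S}_R,(\mu_{\delta_x,\xi})_R)\leq\delta \quad\text{and}\quad \dw(\sigma^{\theta,S}_R,(\mu_{\delta_x,\theta})_R)\leq\delta, \]
and the triangle inequality then reduces the problem to bounding the middle term $\dw(\sigma^{\xi,S}_R,\sigma^{\theta,S}_R)$ by $\delta$.

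For that middle term I would invoke the angle-monotonicity at the Dirac-mass vertex $\delta_x$ from Lemma \ref{lemm:triangle}, which yields $\widetilde{\angle}_{\sigma^{\xi,S}_R\delta_x\sigma^{\theta,S}_R}\leq\widetilde{\angle}_{\tau^{\xi}_S\delta_x\tau^{\theta}_S}$. Applying the Euclidean law of cosines both to the small comparison triangle (side lengths $R,R$) and to the large one, together with $\dw(\delta_x,\tau^{\xi}_S),\dw(\delta_x,\tau^{\theta}_S)\geq S-a$ by the triangle inequality in $\wass(X)$, a direct computation gives
\[ \dw(\sigma^{\xi,S}_R,\sigma^{\theta,S}_R)\leq \frac{R\,\dw(\tau^{\xi}_S,\tau^{\theta}_S)}{S-a} < \frac{R\,\epsilon'}{S-a}, \]
so that choosing $\epsilon'<\delta(S-a)/R$ closes the argument. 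The main point requiring care is the finite case $\theta\in\wass(X)$ in the outer bound above: Proposition \ref{prop:convray} is phrased as a Cauchy condition in $s$, and one has to let $s\to\dw(\delta_y,\theta)$ along the segment $\tau^{\theta}$ and check convergence of $\sigma^s_R$ to $(\mu_{\delta_x,\theta})_R$, which is obtained by the same angle-comparison step applied near the endpoint. Once this mild continuity issue is dispatched, the non-positive-curvature inputs already at hand (Lemmas \ref{lemm:existray} and \ref{lemm:triangle}, Proposition \ref{prop:convray}) suffice to conclude.
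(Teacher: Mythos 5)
Your proof is correct and follows essentially the same route as the paper's: the same decomposition by the triangle inequality through the intermediate points $\tau^\xi_S$ and $\tau^\theta_S$, with Proposition \ref{prop:convray} controlling the two outer terms and the angle monotonicity of Lemma \ref{lemm:triangle} controlling the middle one. The only differences are cosmetic: you make the middle estimate quantitative via the law of cosines and explicitly check the condition $\theta\notin\overline{B}(\delta_x,R)$, which the paper leaves implicit (and in the finite case $\theta\in\wass(X)$ no limiting argument is needed, since Proposition \ref{prop:convray} applies directly with $s'=\dw(\delta_y,\theta)$).
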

\begin{proof}
We set $\alpha = \ep - \dw((\mu_{\delta_x,\eta})_R, (\mu_{\delta_x,\xi})_R)>0$. Let $\theta 
\in 
U(y, \xi,S,\ep')$ and $\Theta$ (respectively $\Xi$) be the unitary geodesic $\mu_{\delta_y,
\theta} $ (respectively the unitary ray $\mu_{\delta_y,\xi} $). We have

\begin{multline*}
\dw ((\mu_{\delta_x,\theta})_R, (\mu_{\delta_x,\eta})_R) \leq 
\shoveright{\dw ((\mu_{\delta_x,\theta})_R, (\mu_{\delta_x,\Theta_S})_R) +
\dw((\mu_{\delta_x,\Theta_S})_R, (\mu_{\delta_x,\Xi_S})_R)}\\
 {}+ \dw ((\mu_{\delta_x,\Xi_S})_R, (\mu_{\delta_x,\xi})_R) + \dw ((\mu_{\delta_x,
 \xi})_R, (\mu_{\delta_x,\eta})_R) 
\end{multline*}
The first and the third term on the right-hand side are smaller than $\alpha/3$ for large $S$ 
thanks 
to Proposition \ref{prop:convray} while the second term is smaller  than $\alpha/3$ for large $S$ 
and small $\ep'$ thanks to lemma \ref{lemm:triangle}.
\end{proof}

\section{The boundary of $\wass(X)$ viewed as a set of measures}
To state the main result of this section, we first need to introduce a definition.

\begin{defi} We set
$$\begin{array}{rccl}
  Am : &  \mathscr{R}_{\delta_x,1}(\wass(X))& \longrightarrow &\pr_1(c\partial X) \\
                            &    (\mu_t)         & \longmapsto       & \mu_{\infty}
  \end{array}
  $$
  the map that sends a unitary ray starting at $\delta_x$ to its asymptotic measure.
  \end{defi}
  
  The main result of this part is the following theorem.
\begin{theo}\label{theo:homeobound} The map $Am : \mathscr{R}_{\delta_x,1}(\wass(X)) 
\longrightarrow 
\pr_1(c\partial X)$ induces a homeomorphism from 
$\partial\wass(X) $ onto $\pr_1(c\partial X)$.
\end{theo}

Note that a straightforward consequence of the result above is
\begin{coro}\label{coro:homeo} Let $d$ be a metric on $\partial X$ that induces the cone topology on $\partial X$ 
and 
$d_C$ the cone metric induced by 
$d$ on $c\partial X$ (see (\ref{equa:conemetric}) for a definition). 
Let us denote by $\wass(c \partial X)$ the quadratic Wasserstein space over the Polish 
space $(c\partial 
X, d_C)$. Then, $\partial\wass(X) $ is homeomorphic to 
the subset of probability measures with unitary speed in $\wass(c \partial X)$.
\end{coro}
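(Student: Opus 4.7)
The strategy is to factor the induced map $\widetilde{Am} : \partial\wass(X) \to \pr_1(c\partial X)$ through the canonical bijection $\partial\wass(X) \leftrightarrow \mathscr{R}_{\delta_x,1}(\wass(X))$ provided by Lemma \ref{lemm:existray} (each boundary class has a unique unit representative based at $\delta_x$), and then exploit the homeomorphism $\phi_x : \mathscr{R}_x(X) \to c\partial X$ from that same lemma. Indeed, Lemma \ref{lemm:existray} shows that the unique displacement interpolation $\nu$ of the unit ray in class $\xi$ starting at $\delta_x$ satisfies $\nu = (\phi_x^{-1})_\# Am(\xi)$, so bijectivity of $\widetilde{Am}$ is immediate. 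Both $\partial\wass(X)$ (first-countable by Proposition \ref{prop:conetop}) and $\pr_1(c\partial X)$ (weak topology on a Polish space) are first-countable, so thanks to Remark \ref{rema:firstc} it suffices to check sequential continuity in each direction.

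For continuity of $\widetilde{Am}$, let $\xi^n \to \xi$ and denote by $(\mu^n_t)$, $(\mu_t)$ the unit representatives from $\delta_x$ and by $\nu^n$, $\nu$ their displacement interpolations on $\mathscr{R}_x(X)$. By Remark \ref{rema:firstc}, $\mu^n \to \mu$ uniformly on compact subsets, so $\mu^n_t \to \mu_t$ in $\wass(X)$ for every $t$. Since $\phi_x$ is a homeomorphism, it is enough to prove $\nu^n \to \nu$ weakly. For tightness, compactness of $\partial X$ makes $\{(\zeta,s):s \leqslant M\}$ compact in $c\partial X$, and Markov's inequality gives
\[(\phi_x)_\# \nu^n(\{s > M\}) = \mu^n_\infty(\{s > M\}) \leqslant M^{-2}\int s^2 \, d\mu^n_\infty = M^{-2}.\]
Any weak cluster point $\tilde\nu$ is supported on the closed set $\mathscr{R}_x(X)$ and satisfies $(e_t)_\# \tilde\nu = \mu_t$ for every $t$ by continuity of $e_t$ together with weak convergence of time slices; the uniqueness part of Lemma \ref{lemm:existray} forces $\tilde\nu = \nu$, and Prokhorov concludes.

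For continuity of $\widetilde{Am}^{-1}$, suppose $\mu^n_\infty \to \mu_\infty$ weakly in $\pr_1(c\partial X)$. Pushing forward through the homeomorphism $\phi_x^{-1}$ and then through each continuous evaluation $e_t : \mathscr{R}_x(X) \to X$ yields weak convergence $\mu^n_t \to \mu_t$ in $\pr(X)$ for every $t$. To upgrade this to Wasserstein convergence, observe that any unit ray $\gamma$ starting at $x$ satisfies $d(x,\gamma_t) = t\,s(\gamma)$, so the second moments around $x$ are constant:
\[\int_X d^2(x,y) \, d\mu^n_t(y) = t^2 \int s^2 \, d\mu^n_\infty = t^2,\]
and identically $t^2$ for $\mu_t$. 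Hence $\mu^n_t \to \mu_t$ in $\wass(X)$ for each $t$, and since both sequences are unit-speed geodesics of $\wass(X)$ based at $\delta_x$, the equicontinuity $\dw(\mu^n_s,\mu^n_{s'}) = |s-s'|$ promotes pointwise to uniform convergence on compact time intervals, which is precisely $\xi^n \to \xi$ in the cone topology.

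The principal technical step is the tightness/uniqueness argument in the first direction: tightness comes for free from the unit-speed normalization $\int s^2 = 1$ via Markov, and uniqueness of the displacement interpolation from a Dirac mass (again Lemma \ref{lemm:existray}) pins down the cluster point. Everything else is a matter of transporting weak convergence through the homeomorphism $\phi_x$ and the evaluation maps, and checking that second moments behave trivially because the starting point is a Dirac mass.
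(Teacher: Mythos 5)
Your argument, as written, (re)proves Theorem \ref{theo:homeobound}: that the asymptotic-measure map is a homeomorphism from $\partial\wass(X)$ onto $\pr_1(c\partial X)$ \emph{for the weak topology}. That part is essentially correct and follows the paper's own route (unit representatives based at $\delta_x$, the homeomorphism $\phi_x$, Lemmas \ref{lemm:existray} and \ref{lemm:triangle}), with a pleasant streamlining: tightness via Markov's inequality and the normalization $\int s^2=1$, plus a cluster-point/uniqueness argument, in place of the properness lemma. But the corollary is not a statement about the weak topology: it asserts a homeomorphism with the unit-speed subset \emph{of} $\wass(c\partial X)$, i.e.\ with the subspace topology coming from the quadratic Wasserstein metric built on the cone metric $d_C$. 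Your proposal never mentions $d_C$ at all, and since $c\partial X$ is unbounded for $d_C$, the Wasserstein topology on $\wass(c\partial X)$ is in general strictly finer than the weak topology; identifying the two on the unit-speed subset is precisely the content the corollary adds to the theorem, and it is missing from your proof.

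The missing step is short but must be spelled out. First, because $d$ induces the cone topology on the compact space $\partial X$, the cone metric $d_C$ induces the cone topology on $c\partial X$ and $(c\partial X,d_C)$ is Polish, so the weak topology appearing in the theorem is the weak topology of this metric space. Second, the $d_C$-distance from the cone point to $(\xi,s)$ equals $s$, hence every measure in the unit-speed subset has second moment exactly $1$ with respect to the apex; since convergence in Wasserstein distance is equivalent to weak convergence together with convergence of the second moment, the topology induced on this subset by $\wass(c\partial X)$ coincides with the weak topology. Combining this with the weak-topology homeomorphism (Theorem \ref{theo:homeobound}, which you could simply have invoked rather than re-derived) yields the corollary. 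A minor further point: to pass from $\xi^n\to\xi$ to uniform convergence on compact sets of the representatives based at $\delta_x$, what you need is the definition of the basic neighborhoods $U(x,\xi,R,\ep)$ together with the monotonicity of Lemma \ref{lemm:triangle}; Remark \ref{rema:firstc} (the quotient-topology statement) does not by itself produce convergent lifts.
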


\begin{rema}
In particular we get the more symmetric result that $c\partial\wass(X)$ 
is homeomorphic to $\wass(c \partial X)$.
\end{rema}

The rest of this part is devoted to the proof of the theorem above. Recall that we have proved in Lemma \ref{lemm:existray} that both ${Am}$ and the map 
$\widetilde{Am}: \partial \wass(X) \rightarrow 
\pr_1(c\partial X)$ it induces are bijective.

The proof of Theorem \ref{theo:homeobound} is in two steps. First, we prove that the map $Am$ is 
a homeomorphism. Then, we use this fact to prove that $\widetilde{Am}$ is a homeomorphism as well.

We start the proof with a definition.

\begin{defi} Let $x \in X$. We denote by
 \begin{eqnarray*}
 ODT_{x} & = &\big\{ \mu \in \pr(\mathscr{R}_x(X)) ; \int s^2(\gamma) \mu(d\gamma)=1\big\}
 \end{eqnarray*}
  the set of unitary dynamical transport plans endowed with the weak topology. We also set
  $$\begin{array}{rccl}
  {(e_t)_{\geq 0}}_{\#} : & ODT_{x} & \longrightarrow & \mathscr{R}_{\delta_x,1}(\wass(X)) \\
                            &    \mu          & \longmapsto       & (\mu_t)  
  \end{array}
  $$
and
  $$\begin{array}{rccc} 
{e_{\infty}}_{\#} : & ODT_x &\longrightarrow & \pr_1(c\partial X) \\
                  &         \mu   & \longmapsto    &  {{\phi_x}}_{\#} \mu
        \end{array}$$
 where $\phi_x$ is defined in (\ref{equa:phix}).
\end{defi}
Thanks to Lemma \ref{lemm:existray}, we have the following commutative diagram where all the maps are one-to-one.
   $$ \begin{diagram}  
ODT_x & \rOnto^{\;\;\;\;{e_{\infty}}_{\#}\;\;\;\;\;\;\;\;\;} &  \pr_1(c\partial X) \\
     \dOnto^{({e_t)_{t \geq 0}}_{\#} \;\;} &  \ruOnto^{Am\;\;}& \\  
     \mathscr{R}_{\delta_x,1} (\wass(X)) & &
     \end{diagram}
     $$
     
 We first prove that
 \begin{lemm} The map ${e_{\infty}}_{\#} $   is a homeomorphism onto $\pr_1(c\partial X)$.
 \end{lemm}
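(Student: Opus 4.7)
The plan is to reduce this lemma to the homeomorphism $\phi_x : \mathscr{R}_x(X) \to c\partial X$ already provided by Lemma \ref{lemm:existray}. By definition, $e_{\infty\#}$ is nothing but the restriction to $ODT_x$ of the pushforward map $\phi_{x\#} : \mathscr{P}(\mathscr{R}_x(X)) \to \mathscr{P}(c\partial X)$, and bijectivity has already been established through the commutative diagram. It therefore only remains to verify continuity in both directions.

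For this I would invoke the general principle that whenever $\phi : Y \to Z$ is a homeomorphism between Polish spaces, the induced pushforward $\phi_\# : \mathscr{P}(Y) \to \mathscr{P}(Z)$ is a homeomorphism for the weak topologies. This is immediate from the definition of weak convergence: $\int f \, \phi_\#\mu^n = \int (f\circ\phi)\,\mu^n$, and $f\in C_b(Z)$ if and only if $f\circ\phi \in C_b(Y)$ since $\phi$ is a homeomorphism. Applying this to $\phi_x$ yields a homeomorphism at the level of all probability measures.

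To descend to the prescribed subsets, I would note that the speed function $s(\gamma)$ on $\mathscr{R}_x(X)$ is, by construction of $\phi_x$ (which sends a ray to the pair asymptote class/speed), exactly the pullback of the cone coordinate $s$ on $c\partial X$. Consequently $\int s^2(\gamma)\,\mu(d\gamma) = \int s^2 \,\phi_{x\#}\mu(d\xi,ds)$, so $\phi_{x\#}$ sends $ODT_x$ bijectively onto $\mathscr{P}_1(c\partial X)$. Restricting a homeomorphism to a subset and its image is still a homeomorphism (each subset inherits the relative topology), which gives the claim.

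The only point that might appear delicate is that the constraint $\int s^2 = 1$ is defined using an unbounded function, so the subsets $ODT_x$ and $\mathscr{P}_1(c\partial X)$ need not be closed in the weak topology; however this is irrelevant, because the argument above only uses that the homeomorphism $\phi_{x\#}$ exchanges the two subsets setwise, and then restricts. I do not expect any genuine obstacle in this lemma; the real work has already been done in proving that $\phi_x$ is a homeomorphism and that $e_{\infty\#}$ is well-defined and bijective.
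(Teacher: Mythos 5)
Your proposal is correct and is essentially the paper's own argument: the paper's proof consists precisely of noting that $\phi_x$ is a homeomorphism and hence induces a homeomorphism between $ODT_x$ and $\pr_1(c\partial X)$ for the weak topologies, which is the terse version of the pushforward-of-a-homeomorphism principle and the speed-pullback observation you spell out. Your extra remarks (continuity via $f\mapsto f\circ\phi_x$ on $C_b$, and the irrelevance of the subsets not being weakly closed) are correct elaborations, not a different route.
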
 
\begin{proof}
The map $\phi_x : \mathscr{R}_{x}(X) \longrightarrow c \partial X$ is a homeomorphism. Therefore 
it induces a homeomorphism between $ODT_x$ and $\pr_1(c\partial X)$  when endowed with the 
weak topology.
\end{proof}
 
 \begin{lemm}\label{lemm:ODT} The map $ {(e_t)_{\geq 0}}_{\#}$ is a continuous map.
 \end{lemm}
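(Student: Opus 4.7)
The plan is to use sequential continuity (both spaces are metrizable, since $ODT_x \subset \pr(\mathscr{R}_x(X))$ carries the weak topology on a Polish space, and the ray space is endowed with the uniform-on-compact topology derived from $\dw$). So assume $\mu^n \to \mu$ weakly in $ODT_x$, set $\mu^n_t = (e_t)_\#\mu^n$ and $\mu_t = (e_t)_\#\mu$, and we must show $\dw(\mu^n_t, \mu_t) \to 0$ uniformly for $t$ in any compact subset of $[0,+\infty)$.

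The first step is pointwise convergence at each fixed $t$. Since $\mathscr{R}_x(X)$ carries the topology of uniform convergence on compact sets, the evaluation map $e_t : \mathscr{R}_x(X) \to X$ is continuous, so for every bounded continuous $f$ on $X$ the function $f \circ e_t$ is bounded continuous on $\mathscr{R}_x(X)$, and weak convergence $\mu^n \to \mu$ yields $\mu^n_t \to \mu_t$ weakly. To upgrade to Wasserstein convergence, I invoke the criterion recalled in the introduction to Wasserstein spaces, namely that convergence in $\dw$ is weak convergence plus convergence of the second moment. Here the second moment of $\mu^n_t$ based at $x$ is
\[
\int_X d^2(y,x)\,\mu^n_t(dy) = \int_{\mathscr{R}_x(X)} d^2(\gamma_t,x)\,\mu^n(d\gamma) = t^2 \int s^2(\gamma)\,\mu^n(d\gamma) = t^2,
\]
because each $\gamma$ in $\mathscr{R}_x(X)$ starts at $x$ with constant speed $s(\gamma)$ and $\mu^n \in ODT_x$. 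The same equality holds for $\mu$, so the second moments are identically $t^2$ and in particular converge. Hence $\dw(\mu^n_t, \mu_t) \to 0$ for each fixed $t \geq 0$.

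The second step is to upgrade pointwise convergence to uniform convergence on each compact $[0,T]$. I use that $(\mu^n_t)$ and $(\mu_t)$ are unit-speed geodesics, hence $1$-Lipschitz in $t$, so the triangle inequality gives
\[
\dw(\mu^n_t,\mu_t) \leq 2|t-s| + \dw(\mu^n_s,\mu_s)
\]
for any $s$. Given $\varepsilon > 0$, pick a finite subdivision $0 = s_0 < \cdots < s_k = T$ of mesh $< \varepsilon/4$; by the first step there is $N$ such that $\dw(\mu^n_{s_i},\mu_{s_i}) < \varepsilon/2$ for all $i$ and $n \geq N$. Then for any $t \in [0,T]$, choosing $s_i$ with $|t-s_i| < \varepsilon/4$ yields $\dw(\mu^n_t,\mu_t) < \varepsilon$, which is the desired uniform bound.

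There is no real obstacle: the novelty of the branching case is entirely absorbed into the existence statement (Proposition \ref{prop:displacement}), while continuity itself reduces to the pleasant fact that the constraint $\int s^2\,d\mu = 1$ defining $ODT_x$ forces the second moments of the evaluations $\mu^n_t$ to be constant, bypassing the only place where weak convergence on $\mathscr{R}_x(X)$ could fail to control the unbounded function $s^2$.
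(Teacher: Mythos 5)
Your proof is correct, and its core is the same as the paper's: continuity of $e_t$ gives weak convergence of $\mu^n_t$ to $\mu_t$, and the normalization $\int s^2\,d\mu^n=1$ defining $ODT_x$ forces the second moments based at $x$ to equal $t^2$ identically, so they trivially converge and weak convergence upgrades to convergence in $\dw$ for each fixed $t$. The only place you diverge is the passage from pointwise to uniform convergence on compacts: the paper invokes Lemma \ref{lemm:triangle}, i.e.\ the monotonicity of $t\mapsto \dw(\mu^n_t,\mu_t)$ for unit geodesics issued from the common Dirac mass $\delta_x$ (a consequence of the $\CAT(0)$ comparison), so that convergence at the single time $T$ controls all of $[0,T]$; you instead observe that both curves are unit-speed, hence the functions $t\mapsto\dw(\mu^n_t,\mu_t)$ are $2$-Lipschitz uniformly in $n$, and conclude by equicontinuity through a finite mesh. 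Your variant is slightly more elementary and does not use the curvature of $X$ at all, at the modest cost of needing pointwise convergence at several times rather than one; both arguments are complete and correct.
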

  \begin{proof} Since the spaces we consider are metrizable, we just have to prove the sequential 
  continuity. Consequently, we are given a sequence $(\mu_n)_{n \in \mathbb{N}}$ such that $\mu_n \rightharpoonup \mu$ in 
  $ODT_x$. Now, since $e_t:  \mathscr{R}_{x}(X) \longrightarrow X$ is a continuous map, we get 
  that ${e_t}_{\#}\mu_n \rightharpoonup {e_t}_{\#}\mu$ in $\mathscr{P}(X)$. By definition of $ODT_x$, we have
  $$ \int s^2(\gamma) \,\mu_n(d\gamma) = \int  s^2(\gamma) \,\mu(d\gamma) =1.$$
  Since $ \int s^2(\gamma) \,\mu_n(d\gamma) = \int d^2(x,\gamma(1))\,
  \mu_n(d\gamma)$, the equality above implies the convergence of the second moment. Namely, we 
  have
  $$ \int d^2(x,\gamma(t)) \, \mu_n(d\gamma) = t^2 \int d^2(x,\gamma(1))\,\mu_n(d\gamma)= \int 
  d^2(x,\gamma(t)) \, \mu(d\gamma).$$
  This implies the convergence of ${e_t}_{\#}\mu_n$ to  ${e_t}_{\#}\mu$ with respect to the 
  Was\-serstein distance (see for instance \cite{Villani}, Theorem 6.9). Thus, we have proved the pointwise 
  convergence of rays. Now, since $t \longrightarrow \dw ({e_t}_{\#}\mu_n,{e_t}_{\#}\mu)$ is 
  nondecreasing as proved in Lemma \ref{lemm:triangle}, we get the result.
\end{proof} 
 
 We end the first part of the proof with the following lemma.
 \begin{lemm}\label{lemm:ODT2}  The map $ {(e_t)_{\geq 0}}_{\#}$ is a homeomorphism.
  \end{lemm}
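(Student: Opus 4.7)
The plan is to establish that the continuous bijection ${(e_t)_{\geq 0}}_{\#}$, whose continuity was proved in Lemma~\ref{lemm:ODT} and whose bijectivity follows from Lemma~\ref{lemm:existray}, has a continuous inverse. Both source and target are metrizable ($ODT_x$ because $\mathscr{R}_x(X)$ is Polish via the homeomorphism $\phi_x$; and $\mathscr{R}_{\delta_x,1}(\wass(X))$ as a subspace of $C([0,+\infty),\wass(X))$ endowed with uniform convergence on compact subsets), so it suffices to prove sequential continuity of the inverse. I would take a sequence $((\mu^n_t))_n$ in $\mathscr{R}_{\delta_x,1}(\wass(X))$ converging to $(\mu_t)$, denote by $\mu^n,\mu \in ODT_x$ their unique preimages, and aim to show $\mu^n \rightharpoonup \mu$ weakly.

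The first step is to establish tightness of $(\mu^n)_n$. Since $\int s^2(\gamma)\,\mu^n(d\gamma) = 1$, Markov's inequality yields $\mu^n(\{\gamma : s(\gamma) > R\}) \leq 1/R^2$ for every $R>0$. Through the homeomorphism $\phi_x:\mathscr{R}_x(X) \to c\partial X$ of (\ref{equa:phix}), the complementary set $\{\gamma : s(\gamma) \leq R\}$ corresponds to the image of $\partial X \times [0,R]$ in $c\partial X$, which is compact because $\partial X$ is compact in the cone topology. Prokhorov's theorem then extracts a subsequence $\mu^{n_k} \rightharpoonup \nu$ for some $\nu \in \mathscr{P}(\mathscr{R}_x(X))$.

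The second step is to identify $\nu$ with $\mu$. For every $t \geq 0$, continuity of the evaluation map $e_t$ gives ${e_t}_\#\mu^{n_k} \rightharpoonup {e_t}_\#\nu$, while by hypothesis ${e_t}_\#\mu^{n_k} = \mu^{n_k}_t \to \mu_t$ in $\wass(X)$, hence weakly; so ${e_t}_\#\nu = \mu_t$ for every $t$. Pushing forward by $e_1$ also gives
\[ \int s^2(\gamma)\,\nu(d\gamma) = \int d^2(x,y)\,\mu_1(dy) = \dw^2(\delta_x,\mu_1) = 1, \]
so $\nu \in ODT_x$. I would then invoke the structural fact recalled in the proof of Lemma~\ref{lemm:existray}: since the starting measure is a Dirac mass, the only transport plan between $\delta_x$ and any $\mu_t$ is $\delta_x \otimes \mu_t$, and uniqueness of geodesics in $X$ between two given points lifts this to uniqueness of the induced dynamical plan; consistency across $T$ then forces $\nu = \mu$.

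Since every subsequence of $(\mu^n)$ admits a further subsequence weakly converging to the same limit $\mu$, the full sequence converges weakly to $\mu$, and ${(e_t)_{\geq 0}}_{\#}^{-1}$ is continuous. The \emph{main obstacle} I anticipate is the identification $\nu=\mu$: in a possibly branching Hadamard space $X$, displacement interpolations are in general not unique, so without the Dirac starting point the limit $\nu$ could differ from $\mu$ even while matching its marginals; the Dirac base point is precisely what rescues uniqueness and makes the argument go through.
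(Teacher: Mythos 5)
Your proof is correct, and it reaches the conclusion by a somewhat different route than the paper. The paper establishes that ${(e_t)_{\geq 0}}_{\#}$ is a \emph{proper} map: given a compact $K\subset\mathscr{R}_{\delta_x,1}(\wass(X))$ and a sequence in its preimage, tightness is deduced from the tightness of the time-$1$ marginals (which lie in a compact subset of $\wass(X)$), arguing as at the end of the proof of Proposition \ref{prop:displacement}; a weak limit is then extracted by Prokhorov and shown to belong to $ODT_x$ via convergence of the second moments along the compact $K$. You instead prove sequential continuity of the inverse directly, and your tightness mechanism is different: Markov's inequality applied to the uniform bound $\int s^2\,d\mu^n=1$, combined with the compactness of $\{\gamma\in\mathscr{R}_x(X):\ s(\gamma)\leq R\}=\phi_x^{-1}\bigl(\{(\xi,s):s\leq R\}\bigr)$ coming from the compactness of $\partial X$, which in fact shows that the \emph{whole} set $ODT_x$ is tight --- a cleaner and slightly stronger statement that does not require the compact set $K$ at all. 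The identification of the limit also differs in presentation: you check $\int s^2\,d\nu=\dw^2(\delta_x,\mu_1)=1$ using the limiting ray and then invoke the injectivity furnished by Lemma \ref{lemm:existray} (uniqueness of the dynamical plan issued from a Dirac mass, the measure being determined by its marginals because a ray from $x$ is recovered deterministically from its position at any later time), whereas the paper identifies the limit curve inside $K$ and gets membership in $ODT_x$ from Wasserstein convergence of ${e_1}_{\#}\mu_{n_k}$. Both arguments rest on the same two pillars --- Prokhorov compactness and the bijectivity from Lemma \ref{lemm:existray} --- but yours trades the properness formulation (which implicitly uses that a continuous proper bijection between metrizable spaces is a homeomorphism) for a direct continuity-of-the-inverse argument, at no loss of rigor.
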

  \begin{proof} Since the topology of both  $\mathscr{R}_{\delta_x,1}(\wass(X))$ and $ODT_x$  is induced 
  by a metric, it is 
  sufficient to prove that ${(e_t)_{\geq 0}}_{\#} $ is a proper map. Moreover, we just have to prove 
  sequential compactness.  We set $K $ a compact subset of $  \mathscr{R}_{\delta_x,1}
  (\wass(X))$. Let $(\mu_n)_{n \in \mathbb{N}} \in {(e_t)_{\geq 0}}_{\#}^{-1}(K)$
. We first notice that $(\mu_n)_{n \in \mathbb{N}}$ is tight. Indeed, by assumption on $K$, the sequence 
  $({e_1}_{\#}\mu_n)_{n \in \mathbb{N}}$ is tight in $\pr(X)$. Therefore, by arguing as in the end of the proof of 
  Proposition \ref{prop:displacement}, we obtain the claim. Consequently, since $\mathscr{R}_x(X)$ 
  is a Polish space, we can apply Prokhorov's theorem to get a converging subsequence 
  $(\mu_{n_k})_{k \in \mathbb{N}}$ to $\widetilde{\mu}$. It remains to prove that $\widetilde{\mu} \in ODT_x$, namely that 
  $\int s^2(\gamma) \, \widetilde{\mu}(d\gamma)=1$. Since $K$ is compact, we can also assume
  without loss of generality that $({e_t}_{\#}\mu_{n_k}) \longrightarrow (\bar{\mu}_t)$ in 
  $\mathscr{R}_{\delta_x,1} (\wass(X))$. Moreover, since ${e_t}_{\#}\mu_{n_k} \rightharpoonup {e_t}_{\#}
  \widetilde{\mu}$ for any $t$, we get  ${(e_t)_{\geq 0}}_{\#}
  (\widetilde{\mu})=(\bar{\mu}_t)$. Therefore,  ${e_1}_{\#}\mu_{n_k} \longrightarrow 
  {e_1}_{\#}\widetilde{\mu}$ in $\wass(X)$. This implies the convergence of the second moment $\int 
  d^2(x,\gamma(1)) \, \mu_{n_k}(d\gamma)=\int s^2(\gamma)\, \mu_{n_k}(d\gamma)=1$ (see for 
  instance \cite{Villani}, Theorem 6.9) and the result is proved.
\end{proof}  
  
 We are now in position to prove Theorem \ref{theo:homeobound}. We set $p_{\partial \wass}$ the 
 canonical projection on $ \partial\wass(X) $. We have the following commutative diagram.
   $$ \begin{diagram}  
 \mathscr{R}_{\delta_x, 1}(\wass(X)) & \rOnto^{\;\;\;\;Am\;\;\;\;\;\;}  &  \pr_1(c\partial X) \\
     \dOnto^{p_{\partial \wass}\;\;\;} &  \ruOnto^{\widetilde{Am}\;\;}& \\  
     \partial\wass(X) & &
     \end{diagram}
     $$
 We have seen at the beginning of the proof that all the maps above are one-to-one. To conclude, 
 it remains to prove that $p_{\partial \wass}^{-1}$ is a continuous map. Since $\partial\wass(X)$ 
 is first-countable (see Remark \ref{rema:firstc}), it is sufficient to prove sequential continuity. To 
 this aim, let $\xi_n \longrightarrow \xi$ in $\partial\wass(X)$ and $({e_t}_{\#}{\mu_n}), (\mu_t) \in  
 \mathscr{R}_{\delta_x, 1}(\wass(X)) $ such that  $p_{\partial \wass}(({e_t}_{\#}{\mu_n}))=\xi_n$ and 
 $p_{\partial \wass}((\mu_t))=\xi$. Recall that under these assumptions, the map $t \longrightarrow 
 \dw({e_t}_{\#}{\mu_n},\mu_t)$ is nondecreasing (see Lemma \ref{lemm:triangle}), thus we 
 just have to show the pointwise convergence of $({e_t}_{\#}{\mu_n})$. This pointwise convergence 
 follows readily from the definition of the cone topology on $\overline{\wass(X)}$.

\bibliographystyle{smfalpha}
\bibliography{biblio.bib}

\end{document}